\definecolor{mygreen}{RGB}{28,172,0} 
\definecolor{mylilas}{RGB}{170,55,241}
\renewcommand{\vec}{\bm}
\newcommand{\mat}{\mathbf}
\newcolumntype{P}[1]{>{\centering\arraybackslash}p{#1}}
\numberwithin{theorem}{section}
\newcommand{\TheTitle}{Robust Designs Via Geometric Programming} 
\newcommand{\TheAuthors}{A. Saab, E. Burnell, and W. W. Hoburg}
\headers{\TheTitle}{\TheAuthors}
\title{{\TheTitle}}
\author{
  Ali Saab\thanks{Department of Computations for Design and Optimization, Massachusetts Institute of Technology, Cambridge, MA
    (\email{saab@mit.edu}).}
  \and
  Edward Burnell\thanks{Department of Mechanical Engineering, Massachusetts Institute of Technology, Cambridge, MA (\email{eburn@mit.edu}).}
  \and
  Warren W. Hoburg\thanks{Department of Aeronautics and Astronautics, Massachusetts Institute of Technology, Cambridge, MA (\email{whoburg@mit.edu}).}
}
\begin{document}

\maketitle

\begin{abstract} \label{abs}
An approximate formulation of a robust geometric program (RGP) as a convex program is proposed. Interest in using geometric programs (GPs) to model complex engineering systems has been growing, and this has motivated explicitly modeling the uncertainties fundamental to engineering design optimization. RGPs provide a framework for modeling and solving GPs while representing their uncertainties as belonging to an uncertainty set. The RGP methodologies presented here are based on reformulating the GP and then robustifying it with methods from robust linear programming. These new methodologies, along with previous ones from the literature, are used to robustify two aircraft design problems, and the results of these different methodologies are compared and discussed.
\end{abstract}

\begin{keywords}
  Geometric Programming, Posynomials, Robust Optimization, Uncertainty Set, Robust Linear Programming, Safe Approximation, Aircraft, Conic Optimization
\end{keywords}

\begin{AMS}
  90C90, 90C99
\end{AMS}

\section{Introduction} \label{intro}
A \textbf{geometric program in posynomial form} is a log-convex optimization problem of the form:
\begin{equation}
\begin{aligned}
	& \text{minimize} && f_0 \left(\vec{u}\right) \\
	& \text{subject to} && f_i \left(\vec{u}\right) \leq 1, i = 1,...,m_p\\
	& && h_i \left(\vec{u}\right) = 1, i = 1, ...,m_e\\
\end{aligned}
\label{GP_standard}
\end{equation}
where each $f_i$ is a {\em posynomial} and each $h_i$ is a {\em monomial}. A monomial $h(\vec{u})$ is a function of the form:
\begin{displaymath}
	h(\vec{u}) = e^{b}\textstyle{\prod}_{j=1}^{n}{u_j}^{a_j}
\end{displaymath}
where $\vec{a}$ is a row vector in $\mathbb{R}^n$, $\vec{u}$ is a column vector in $\mathbb{R}^n_+$ , and $b$ is in $\mathbb{R}$. A posynomial $f(\vec{u})$ is the sum of $K \in \mathbb{Z}^+$ monomials:
\begin{displaymath}
	f(\vec{u}) = \textstyle{\sum_{k=1}^{K}}e^{b_{kj}}\prod_{j=1}^{n}{u_j}^{a_{kj}}
\end{displaymath}
where $\vec{a_{k}}$ is a row vector in $\mathbb{R}^n$, $\vec{u}$ is a column vector in $\mathbb{R}^n_+$, and $b_{k}$ is in $\mathbb{R}$ \cite{GP_tutorial}.\\
A logarithmic change of the variables $x_j = \log(u_j)$ would turn a monomial into an {\em exponential of an affine function} and a posynomial into a {\em sum of exponentials of affine functions}. A transformed monomial $h_i(\vec{x})$ is a function of the form:
\begin{displaymath}
    h_i(\vec{x}) = e^{\vec{a_i}\vec{x} + b_i}
\end{displaymath}
where $\vec{a_i}$ is a row vector in $\mathbb{R}^n$, $\vec{x}$ is a column vector in $\mathbb{R}^n$ , and $b_i$ is in $\mathbb{R}$. A transformed posynomial $f_i(\vec{x})$ is the sum of $K_i \in \mathbb{Z}^+$ monomials:
\begin{displaymath}
    f_i(\vec{x}) = \textstyle{\sum_{k=1}^{K_i}}e^{\vec{a_{ik}}\vec{x} + b_{ik}}
\end{displaymath}
where $\vec{a_{ik}}$ is a row vector in $\mathbb{R}^n$, $\vec{x}$ is a column vector in $\mathbb{R}^n$, and $b_{ik}$ is in $\mathbb{R}$.

Solving large-scale GPs became extremely efficient and more reliable \cite{GP_tutorial} after the development of interior point methods \cite{nesterov_nemirovskii_1994} \cite{kortanek_xu_ye_1997}, prompting engineers to use GP formulations for the modeling and design optimization of complex engineering systems. Recent applications of GP have been found in fields as varied as digital circuit optimization \cite{boyd_kim_patil_horowitz_2005}, analog circuit design \cite{hershenson_2004}, communication systems \cite{chiang_2005}, chemical process control \cite{wall_greening_woolsey_1986}, environmental quality control \cite{greenberg_1995}, statistics \cite{mazumdar_jefferson_1983}, as well as antenna and aircraft design \cite{babakhani_lavaei_doyle_hajimiri_2010} \cite{hoburg_abbeel_2014}.

While these GPs can be solved to global optimality for fixed parameter values, many of those values are uncertain during the engineering design process, either because of intrinsic uncertainty (e.g. wind velocity), or unpredictability at that stage of the design process (e.g. the final assembled weight of an airplane's avionics). Designers, seeking a solution they can trust enough to manufacture, account for these uncertainties by making their models conservative. This conservativeness almost always takes the form of specifying worst-case values and margins for all uncertain constraints and parameters \cite{hoburg_abbeel_2014}. Such an approach exaggerates the impact of uncertainties, leading to sub-optimality which is undesired. From a designer's perspective, however, this unwanted conservativeness is a way to ensure robust results. New techniques that can represent design uncertainties less conservatively are thus able to directly improve engineering solutions and present new design opportunities.

Robust optimization has been used to efficiently solve optimization problems under uncertainty \cite{soyster_1973}. In constrast to stochastic optimization \cite{birge_louveaux_2011} \cite{prékopa_2010}, probability density functions are replaced in robust optimization by uncertainty sets. In the late 1990s, Ben-Tal, Nemirovski, and El Ghaoui proved the tractability of robust linear, quadratic, and second order conic programming problems \cite{ben-tal_ghaoui_nemirovski_2009}. Later, Bertsimas et al. derived a tractable approximation of a subset of robust conic optimization problems \cite{bertsimas_sim_2005}, and discussed the importance and means of choosing an uncertainty set to best describe the problem while preserving tractability \cite{bandi_bertsimas_2012}.

Robust geometric programming is co-NP hard in its natural posynomial form \cite{RGPcoNP}, but an interesting tractable formulation for an approximate robust geometric program was presented by Hsiung, Kim, and Boyd \cite{hsiung_kim_boyd_2007}. They proposed a bivariate safe approximation for the posynomial constraints, replacing each posynomial constraint with a set of two-term (``bivariate'') posynomial constraints that can be easily approximated using piecewise-linear functions. Although it simplifies the problem into a linear optimization problem, the piecewise-linear approximation of every posynomial leads to a considerable number of additional constraints. Moreover, because the bivariate safe approximation decouples all but two monomials in each posynomial, it yields an effective uncertainty set larger than the specified one and thus more conservative. Our methods seek to improve on \cite{hsiung_kim_boyd_2007} by reducing the number of constraints while more accurately approximating the uncertainty set to achieve better (less conservative) solutions.

In this article we use three novel methodologies to construct a tractable approximation of a robust geometric program while drastically reducing the number of constraints compared to \cite{hsiung_kim_boyd_2007}. Our first approach deals with each monomial while separating it from its posynomial. Our second approach assumes that only the coefficients $b$ are uncertain and deals with posynomials as linear constraints. Our third approach accounts for dependency between monomials while letting both the coefficients $b$ and the exponents $\vec{a}$ be uncertain.

In Section \ref{RGP} of this article, we define robust geometric programming and derive its robust counterparts. Section \ref{Conservative} introduces a simple formulation, our most conservative proposal, which is then refined in Section \ref{EqIntFor} with the concept of monomial partitioning. Section \ref{twoTerm} briefly reviews the piecewise-linearization of two term posynomials in \cite{hsiung_kim_boyd_2007}, an essential tool for our work. Section \ref{k_term} discusses different methodologies for dealing with larger posynomials. In Section \ref{apps} we apply all methodologies, both new and old, to several previously published GPs for aircraft design and discuss the results. Finally, Section \ref{conc} states our contributions and conclusions.

\section{Robust Geometric Programming} \label{RGP}
The data in a geometric program is usually prone to uncertainties that could either have a probabilistic description or belong to an uncertainty set. Robust geometric programming assumes the latter, and solves the problem for the worst case scenario by finding the best solution that is feasible to all possible realizations from the uncertainty set. This section introduces different forms of GPs, derives the intractable RGP formulations, and then states the tractable approximation presented in \cite{hsiung_kim_boyd_2007}.
\subsection{Other Forms of a Geometric Program}

By replacing each monomial equality constraint $h\left(\vec{x}\right) = 1$ by the two monomial inequality constraints $h\left(\vec{x}\right) \leq 1$ and $\frac{1}{h\left(\vec{x}\right)} \leq 1$, we arrive at the \textbf{inequality form} of a GP:
\begin{equation}
\begin{aligned}
	& \min && \textstyle{\sum}_{k=1}^{K_0}e^{\vec{a_{0k}}\vec{x} + b_{0k}} \\
	& \text{s.t.} && \textstyle{\sum}_{k=1}^{K_i}e^{\vec{a_{ik}}\vec{x} + b_{ik}} \leq 1 \quad \forall i \in 1,...,m\\
\end{aligned}
\label{GP_inequality}
\end{equation}
Without loss of generality, the objective function will be assumed deprived of any data (i.e. replacing it with its epigraph form). Let $\mat{A}$ be the exponents of a GP with components $\vec{a_{ik}}$, and $\vec{b}$ be the coefficients of a GP with components $b_{ik}$.

Also relevant is the \textbf{convex form} of a GP, where a logarithm is applied to the objective function and to the constraints:
\begin{equation}
\begin{aligned}
& \min &&\log\left(f_0\left(\vec{x}\right)\right)\\
& \text{subject to} &&\log\left(\textstyle{\sum}_{k=1}^{K_i}e^{\vec{a_{ik}}\vec{x} + b_{ik}}\right) &&\leq 0 &&\forall i \in 1,...,m
\end{aligned}
\label{GP_convex}
\end{equation}
Note that a GP in its convex form is a convex program whose dual is the entropy maximization problem \cite{woodyThesis}.

A final interesting form of a GP is the \textbf{categorized form}, which will be introduced later in Section \ref{EqIntFor}, where the constraints of a GP are categorized into different sets according to the number of monomial terms in each posynomial constraint.
\subsection{Robust Counterparts}
As mentioned before, a formulation immune to uncertainty in the system's data should be derived. The data ($\mat{A}$ and $\vec{b}$) will be contained in an uncertainty set $\mathcal{U}$, where $\mathcal{U}$ is parameterized affinely by perturbation vector $\vec{\zeta}$ as follows:
\begin{equation}
	\mathcal{U} = \left\{\left[\mat{A};\vec{b}\right] = \left[\mat{A}^0;\vec{b}^0\right] + \textstyle{\sum_{l=1}^{L}\zeta_l\left[\mat{A}^l;\vec{b}^l\right]}\right\}
	\label{Data}
\end{equation}
where $\vec{\zeta}$ belongs to a conic perturbation set $\mathcal{Z} \in \mathbb{R}^L$ paramaterized by $\mat{F},\,\mat{G},\,\vec{h}$ and $\textbf{K}$ such that
\begin{equation}
\mathcal{Z} = \left\{ \vec{\zeta} \in \mathbb{R}^L: \exists \vec{u} \in \mathbb{R}^k:\mat{F}\vec{\zeta} + \mat{G}\vec{u} + \vec{h} \in \textbf{K} \right\}
\label{perturbation_set}
\end{equation}
where $\mathbf{K}$ is a Regular cone in $\mathbb{R}^N$ with a non-empty interior if it is not polyhedral, $\mat{F} \in \mathbb{R}^{N \times L}$, $\mat{G} \in \mathbb{R}^{N \times k}$, and $\vec{h} \in \mathbb{R}^{N}$.

Accordingly, the robust counterpart of the uncertain geometric program in \eqref{GP_inequality} is:
\begin{equation}
\begin{aligned}
& \min &&f_0\left(\vec{x}\right)\\
& \text{subject to} &&\textstyle{\sum}_{k=1}^{K_i}e^{\vec{a_{ik}}\left(\zeta\right)\vec{x} + b_{ik}\left(\zeta\right)} &&\leq 1 &&\forall i \in 1,...,m && \forall \vec{\zeta} \in \mathcal{Z}
\end{aligned}
\label{GP_counterparts}
\end{equation}
These constraints state that the robust optimal solution should be feasible for all possible realizations of the perturbation vector $\vec{\zeta}$. However, the above is a semi-infinite optimization problem, that is, an optimization problem with finite number of variables and infinite number of constraints. Such a problem is intractable using current solvers, and so an equivalent finite set of constraints is usually derived:
\begin{equation}
\begin{aligned}
& \min &&f_0\left(\vec{x}\right)\\
& \text{subject to} &&\max_{\vec{\zeta} \in \mathcal{Z}} \left\{\textstyle{\sum}_{k=1}^{K_i}e^{\vec{a_{ik}}\left(\zeta\right)\vec{x} + b_{ik}\left(\zeta\right)}\right\} &&\leq 1 &&\forall i \in 1,...,m\\
\end{aligned}
\label{GP_counterparts_finite}
\end{equation}
Here each constraint is a maximization problem, and in linear programs can be replaced by its dual. Unfortunately such a replacement will not work in the case of geometric programming. Therefore we will gradually construct a tractable approximation of this robust geometric program throughout this article, with a focus on polyhedral and elliptical uncertainty sets.

\subsection{Two-Term Formulation}
The current state-of-the-art in approximating RGPs is found in \cite{hsiung_kim_boyd_2007}. They present a tractable formulation of an approximate robust geometric program which by replacing each posynomial with a set of two-term posynomials arrives at the following formulation:
\begin{equation}
\begin{aligned}
& \min &&f_0\left(\vec{x}\right)\\
& \text{s.t.} &&\max_{\vec{\zeta} \in \mathcal{Z}} \left\{e^{\vec{a_{i1}}\left(\zeta\right)\vec{x} + b_{i1}\left(\zeta\right)} + e^{t_1}\right\} &&\leq 1 &&\forall i:K_i \geq 3\\
& &&\max_{\vec{\zeta} \in \mathcal{Z}} \left\{e^{\vec{a_{ik}}\left(\zeta\right)\vec{x} + b_{ik}\left(\zeta\right)} + e^{t_k}\right\} &&\leq e^{t_{k-1}} &&\forall i:K_i \geq 4 \\ & && && && \forall k \in 2,...,K_i-2\\
& &&\max_{\vec{\zeta} \in \mathcal{Z}} \left\{\textstyle{\sum}_{k=K_i-1}^{K_i}e^{\vec{a_{ik}}\left(\zeta\right)\vec{x} + b_{ik}\left(\zeta\right)}\right\} &&\leq e^{t_{K_i-2}} &&\forall i:K_i \geq 3\\
& &&\max_{\vec{\zeta} \in \mathcal{Z}} \left\{\textstyle{\sum}_{k=1}^{K_i}e^{\vec{a_{ik}}\left(\zeta\right)\vec{x} + b_{ik}\left(\zeta\right)}\right\} &&\leq 1 &&\forall i:K_i \leq 2\\
\end{aligned}
\label{Boyd_formulation}
\end{equation}
The left-hand side of each constraint above is either a monomial or a two-term posynomial; monomials are directly tractable, while two-term posynomials can be well approximated by piecewise-linear functions and thereby made tractable, as will be discussed in Section \ref{twoTerm}.\\
As an example, a GP 
\begin{displaymath}
\begin{aligned}
    & \min &&f\\
    & \text{ s.t.} &&\max\left\{M_1 + M_2 + M_3 + M_4\right\} &&\leq 1\\
    & && \max\left\{M_5 + M_6\right\} &&\leq 1
\end{aligned}
\end{displaymath}
would be reformulated as:
\begin{displaymath}
\begin{aligned}
    & \min &&f\\
    & \text{ s.t.} && \max\left\{M_1 + e^{t_1}\right\} && \leq 1\\
    & && \max\left\{M_2 + e^{t_2}\right\} && \leq e^{t_1}\\
    & && \max\left\{M_3 + M_4\right\} && \leq e^{t_2}\\
    & && \max\left\{M_5 + M_6\right\} &&\leq 1
\end{aligned}
\end{displaymath}
Where $\left\{M_i\right\}_{i=1}^{6}$ is a family of monomials.

The formulation in \eqref{Boyd_formulation} decouples every monomial in a posynomial except for the last two, yielding a conservative solution. If $\mathbf{P} = \left\{i:K_i \geq 3\right\}$, $\mathbf{N} = \left\{i:K_i = 2\right\}$, and $\mathbf{M} = \left\{i:K_i = 1\right\}$, and if $r$ is the number of piecewise-linear terms used to approximate a two-term posynomial, then the number of constraints for the approximate tractable problem in \eqref{Boyd_formulation} is: $$r\textstyle{\sum}_{k \in \mathbf{P}} \left(K_i - 1\right) + r|\mathbf{N}| + |\mathbf{M}|$$
\section{Simple Conservative Formulation} \label{Conservative}
This section introduces our first novel approximation of the robust program in \eqref{GP_counterparts_finite}. While this formulation is the simplest to implement, it is also the most conservative as it completely decouples the monomials of each posynomial.

\theoremstyle{definition}
\begin{definition}
A solution $\vec{x}$ to a geometric program is said to be feasible only if $\vec{x}$ satisfies all constraints of that geometric program.
\end{definition}

\begin{definition}
Let $\mathcal{C}$ be a constraint on $\vec{x}$, and let $\mathcal{S}$ be a set of constraints on $\vec{x}$ and some additional variables $\vec{y}$. $\mathcal{S}$ is said to be a safe approximation for $\mathcal{C}$ if the $\vec{x}$ component of every feasible solution $\left(\vec{x},\vec{y}\right)$ is $\mathcal{S}$-feasible to $\mathcal{C}$ \cite{ben-tal_ghaoui_nemirovski_2009}.
\end{definition}
For example, if $f\left(x\right) \leq g\left(x\right)$ $\forall x$, then $g\left(x\right) \leq 1$ is a safe approximation of $f\left(x\right) \leq 1$.

One way to approach the intractability of the dual-form problems in \eqref{GP_counterparts_finite} is to replace each constraint with a tractable safe approximation. The fact that
$$\max_{\vec{\zeta} \in \mathcal{Z}} \textstyle{\sum}_{k=1}^{K_i}e^{\vec{a_{ik}}\left(\zeta\right)\vec{x} + b_{ik}\left(\zeta\right)} \leq \sum_{k=1}^{K_i} {\displaystyle \max_{\vec{\zeta} \in \mathcal{Z}}}\ e^{\vec{a_{ik}}\left(\zeta\right)\vec{x} + b_{ik}\left(\zeta\right)}$$
suggests replacing the intractable constraints by: 
\begin{equation}
	\sum_{k=1}^{K_i}\max_{\vec{\zeta} \in \mathcal{Z}} \left\{e^{\vec{a_{ik}}\left(\zeta\right)\vec{x} + b_{ik}\left(\zeta\right)}\right\} \leq 1 \qquad \forall i \in 1,...,m
	\label{conservative_robust_constraint}
\end{equation}
From \eqref{conservative_robust_constraint}, a suggested safe approximation of  \eqref{GP_counterparts_finite} is:
\begin{equation}
\begin{aligned}
& \min &&f_0\left(\vec{x}\right)\\
& \text{subject to} &&\textstyle{\sum}_{k=1}^{K_i} {\displaystyle \max_{\vec{\zeta} \in \mathcal{Z}}} \left\{e^{\vec{a_{ik}}\left(\zeta\right)\vec{x} + b_{ik}\left(\zeta\right)}\right\} &&\leq 1 &&\forall i \in 1,...,m
\end{aligned}
\label{GP_safe_conservative}
\end{equation}
By adding some dummy variables, the above can be rewritten as:
\begin{equation}
\begin{aligned}
& \min &&f_0\left(\vec{x}\right)\\
& \text{subject to} &&\textstyle{\sum}_{k=1}^{K_i}e^{t_{ik}} &&\leq 1 &&\forall i \in 1,...,m \\
& &&\max_{\vec{\zeta} \in \mathcal{Z}} \left\{e^{\vec{a_{ik}}\left(\zeta\right)\vec{x} + b_{ik}\left(\zeta\right)}\right\} &&\leq e^{t_{ik}} &&\forall i \in 1,...,m &&\forall k \in 1,...,K_i\\
\end{aligned}
\label{GP_safe_decoupled}
\end{equation}
And finally, in convex form:
\begin{equation}
\begin{aligned}
& \min &&\log\left(f_0\left(\vec{x}\right)\right)\\
& \text{subject to} &&\log(\textstyle{\sum}_{k=1}^{K_i}e^{t_{ik}}) &&\leq 0 &&\forall i \in 1,...,m \\
& &&\max_{\vec{\zeta} \in \mathcal{Z}} \left\{\vec{a_{ik}}\left(\zeta\right)\vec{x} + b_{ik}\left(\zeta\right)\right\} &&\leq t_{ik} &&\forall i \in 1,...,m &&\forall k \in 1,...,K_i\\
\end{aligned}
\label{GP_safe_convex}
\end{equation}
Note that dummy variables need not be introduced for constraints with $K_i = 1$.
 
The convex constraints in \eqref{GP_safe_convex} are deprived of data; uncertainty is present only in the linear constraints. As a result, techniques from robust linear programming (Appendix \ref{LP_to_GP}) can be used to robustify the uncertain constraints. Under a polyhedral uncertainty set these robust linear constraints are equivalent to a set of linear constraints, allowing the approximate RGP to be transformed into a GP. A similar transformation can also be done for coefficients in an elliptical uncertainty set.

If the exponents are also in an elliptical uncertainty set, the transformed program is no longer a GP but becomes a cone program with exponential- and second-order-cone (SOC) constraints; programs such as \textit{ECOS} \cite{bib:Domahidi2013ecos} and \textit{SCS} \cite{o’donoghue_chu_parikh_boyd_2016} are used to solve such problems efficiently and reliably.

Note that further research can be done on approximating SOC constraints by GP-compatible softmax functions \cite{hoburg_kirschen_abbeel_2016}, and thus solving the conic program mentioned above as a GP.

\begin{definition}
Two monomials are directly dependent if both depend on the same perturbation element $\zeta_i$. e.g. $M_1 = e^{\vec{a_1}(\zeta_1)\vec{x} + b_1}$ and $M_2 = e^{\vec{a_3}(\zeta_1, \zeta_2)\vec{x} + b_3}$ are directly dependent as they share the perturbation element $\zeta_1$.
\end{definition}

\begin{definition}
Two monomials are indirectly dependent if they are both dependent on a third monomial. e.g. with $M_1 = e^{\vec{a_1}(\zeta_1)\vec{x} + b_1}$, $M_2 = e^{\vec{a_1}(\zeta_2)\vec{x} + b_2}$, and $M_3 = e^{\vec{a_3}(\zeta_1, \zeta_2)\vec{x} + b_3}$, $M_1$ and $M_2$ are indirectly dependent because both share a perturbation element with (and are hence directly dependent on) $M_2$.
\end{definition}

\begin{definition}
Two monomials are independent if they are neither directly nor indirectly dependent.
\end{definition}

\begin{definition}
Two coefficients are consistently dependent if they both increase or both decrease as each perturbation element varies. e.g. if $b_{11} = b_{11}^0 + \zeta_1$ and $b_{12} = b_{12}^0 + 3\zeta_1 - \zeta_2$, then $b_{11}$ and $b_{12}$ are dependent in a consistent way because their coefficients for $\zeta_1$ have the same sign.
\end{definition}

Despite the decoupling of monomials, this formulation is exactly equivalent to \eqref{GP_counterparts_finite} if each posynomial satisfies at least one of the following conditions:
\begin{itemize}
	\item $C_1$: Only one monomial in the posynomial has uncertain parameters
	\item $C_2$: All monomials in the posynomial are either independent or dependent by a shared uncertain monomial factor, e.g. $$
	\begin{aligned}
	p &= e^{a_1\left(\zeta\right)\vec{x}_1 + a_2\vec{x}_2 + b_1\left(\zeta\right)} + e^{a_5\vec{x}_1 + b_3} + e^{a_1\left(\zeta\right)\vec{x}_1 + a_3\vec{x}_3 + b_1\left(\zeta\right)} + e^{a_1\left(\zeta\right)\vec{x}_1 + a_4\vec{x}_2 + b_1\left(\zeta\right)}\\
	&= e^{a_1\left(\zeta\right)\vec{x}_1 + b_1\left(\zeta\right)}\left(e^{a_2\vec{x}_2} + e^{a_3\vec{x}_3} + e^{a_4\vec{x}_2}\right) + e^{a_5\vec{x}_1 + b_3}
	\end{aligned}
	$$
	\item $C_3$: The Perturbation set is independent, e.g. $\mathcal{Z} = \left\{ \vec{\zeta} \in \mathbb{R}^L: \|\vec{\zeta}\|_{\infty} \leq \Gamma\right\}$. Moreover, the monomials in the posynomial are either independent, or their coeffcients are consistently dependent.
\end{itemize}
\ \\
Specifically, if the $i^{th}$ posynomial satisfies $C_1$, $C_2$, or $C_3$ then its maximum under perturbation will always equal the sum of the maximums of each decoupled monomial:
$$
\max_{\vec{\zeta} \in \mathcal{Z}} \left\{\textstyle{\sum}_{k=1}^{K_i}e^{\vec{a_{ik}}\left(\zeta\right)\vec{x} + b_{ik}\left(\zeta\right)}\right\} = \sum_{k=1}^{K_i}\max_{\vec{\zeta} \in \mathcal{Z}} \left\{e^{\vec{a_{ik}}\left(\zeta\right)\vec{x} + b_{ik}\left(\zeta\right)}\right\}
$$

Even when applied to a geometric program whose every posynomial does not satisfy $C_1$, $C_2$, or $C_3$, this formulation has the advantage of requiring only a small number of GP or conic constraints. If $\mathbf{P} = \left\{i:K_i \geq 3\right\}$, $\mathbf{N} = \left\{i:K_i = 2\right\}$, and $\mathbf{M} = \left\{i:K_i = 1\right\}$ then the total number of constraints in \eqref{GP_safe_convex} is: 
$$\textstyle{\sum}_{k \in \mathbf{P}} (K_i+1) + 3|\mathbf{N}| + |\mathbf{M}|$$
which is: 
$$(r-1)\textstyle{\sum}_{k \in \mathbf{P}} (K_i-3) + 2(r-2)|\mathbf{P}| + (r-3)|\mathbf{N}|$$
constraints fewer than in the Two-term formulation of \cite{hsiung_kim_boyd_2007}.
Specifically, this decoupled monomial formulation will always have fewer constraints than \eqref{Boyd_formulation} if 3 or more piecewise sections are used, which will almost always be the case; three linear sections form a poor approximation of a two-term posynomial. Furthermore, this formulation is only more conservative than that in \eqref{Boyd_formulation} if the number of sections is high enough and one of the approximated two-term posynomials captures an active dependence.
\section{Equivalent Intermediate Formulation} \label{EqIntFor} Our focus is now on modifying the methodology of Section \ref{Conservative} to make better approximations of \eqref{GP_counterparts_finite}. This section presents an enhanced formulation of \eqref{GP_safe_convex} that is equivalent to the robust counterparts in \eqref{GP_counterparts_finite} but with smaller, easier to handle posynomial constraints.

In order to divide the posynomial into smaller posynomials, consider the set $\mathbf{I}_i = \left\{ 1,2,...,K_i\right\} $ associated with the $i^{th}$ constraint, and define the equivalence relation $\mathcal{R}$ (see Appendix \ref{EqRel} for the definition of an equivalence relation) given by:
\begin{equation}
\begin{aligned}
\mathcal{R} = \{&k_1 \sim k_2 \iff e^{\vec{a_{ik_1}}\left(\zeta\right)\vec{x} + b_{ik_1}\left(\zeta\right)} \text{ and } e^{\vec{a_{ik_2}}\left(\zeta\right)\vec{x} + b_{ik_2}\left(\zeta\right)} \text{ are dependent} \}
\end{aligned}
\label{equivalence_relation}
\end{equation}

$\mathcal{R}$ splits $\mathbf{I}_i$ into equivalence classes $S_{i,1},\ S_{i,2},\ ...\ S_{i,N_e^i}$, $N_e^i \leq K_i$. Because these classes are completely independent it must be the case that
$$
\max_{\vec{\zeta} \in \mathcal{Z}} \left\{\textstyle{\sum}_{k=1}^{K_i}e^{\vec{a_{ik}}\left(\zeta\right)\vec{x} + b_{ik}\left(\zeta\right)}\right\} = \textstyle{\sum}_{j=1}^{N_e^i} {\displaystyle \max_{\vec{\zeta} \in \mathcal{Z}}} \left\{\textstyle{\sum}_{k \in S_{i,j}}e^{\vec{a_{ik}}\left(\zeta\right)\vec{x} + b_{ik}\left(\zeta\right)}\right\}
$$
The robust counterparts of \eqref{GP_counterparts_finite} are thus equivalent to:
\begin{equation}
\begin{aligned}
&\min &&f_0(x)\\
&\text{s.t.} &&\textstyle{\sum}_{j=1}^{N_e^i} e^{t_{ij}} &&\leq 1 \qquad &&\forall i \in 1,...,m\\
& &&\max_{\vec{\zeta} \in \mathcal{Z}} \left\{\textstyle{\sum}_{k \in S_{i,j}} e^{\vec{a_{ik}}\left(\zeta\right)\vec{x} + b_{ik}\left(\zeta\right)} \right\} &&\leq e^{t_{ij}} &&\forall i \in 1,...,m\\
& && && &&\forall j \in 1, ..., N_e^i\\
\end{aligned}
\label{equivalent_class_setP}
\end{equation}

Our main concern is now the larger posynomials, which motivate the introduction of a \textbf{categorized form} of a robust GP, in which constraints are categorized into three sets. The set of monomial constraints is labeled by $\mathbf{M} \equiv \left\{(i, j): |S_{i,j}| = 1\right\}$, two-term posynomials by $\mathbf{N} \equiv \left\{(i, j): |S_{i,j}| = 2\right\}$, and constraints with three or more posynomials are labeled by $\mathbf{P} \ \equiv \left\{(i,j): |S_{i,j}| \geq 3\right\}$. Further definining $S_{i,j}^k$ as the $k^{th}$ element of $S_{i,j}$, the \textbf{categorized form} is:
\begin{equation}
\begin{aligned}
&\min &\multicolumn{2}{l}{$f_0(x)$}\\
&\text{s.t.} &\multicolumn{2}{l}{$\textstyle{\sum}_{j=1}^{N_e^i} e^{t_{ij}}$} &&\leq 1 \qquad &&\forall i \in 1,...,m\\
& &\max_{\vec{\zeta} \in \mathcal{Z}}& \left\{\textstyle{\sum}_{k \in S_{i,j}} e^{\vec{a_{ik}}\left(\zeta\right)\vec{x} + b_{ik}\left(\zeta\right)} \right\} &&\leq e^{t_{ij}} &&\forall(i,j) \in \mathbf{P}\\
& &\max_{\vec{\zeta} \in \mathcal{Z}}&
\multicolumn{5}{l}{$\left\{e^{\vec{a_{iS_{i,j}^1}}\left(\zeta\right)\vec{x} + b_{iS_{i,j}^1}\left(\zeta\right)} + e^{\vec{a_{iS_{i,j}^2}}\left(\zeta\right)\vec{x} + b_{iS_{i,j}^2}\left(\zeta\right)} \right\} \leq e^{t_{ij}}$}\\
&&&&& &&\forall(i,j) \in \mathbf{N}\\
& &\max_{\vec{\zeta} \in \mathcal{Z}}& \left\{e^{\vec{a_{iS_{i,j}^1}}\left(\zeta\right)\vec{x} + b_{iS_{i,j}^1}\left(\zeta\right)} \right\} &&\leq e^{t_{ij}} &&\forall(i,j) \in \mathbf{M}\\
\end{aligned}
\label{categorizedForm}
\end{equation}
Figure \ref{partitioning} presents an illustrative example of this partitioning. 
\begin{figure}[H]
\captionsetup{justification=centering, font=small}
\begin{center}
\includegraphics[scale=0.25]{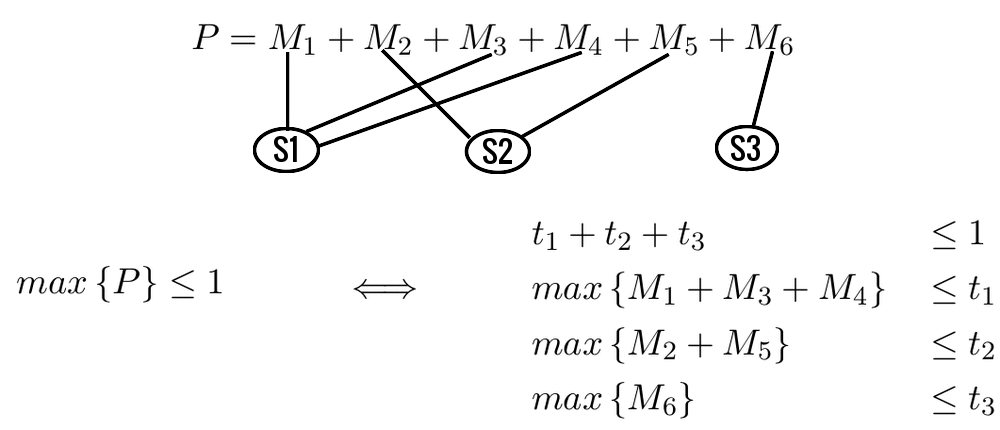}
\end{center}
\caption{Example of partitioning a large posynomial into smaller posynomials.}
\label{partitioning}
\end{figure}

This partitioning will not be of much benefit when all monomials in a posynomial are uncertain and dependent on each other. However, when most monomials are certain, or when some are independent, a large posynomial can be effectively reduced into several smaller ones, or even into monomials. In practice, most GPs for engineering design are only sparsely dependent, and so this formulation provides a significant decrease (often by more than half) in the number of constraints.
\section{Robust Two Term Posynomials[Review]} \label{twoTerm}

We will now review work done in \cite{hsiung_kim_boyd_2007} on using piecewise-linear functions to approximate two-term posynomials. 

Consider the convex function $\phi(x) = \log(1 + e^x)$. The unique best piecewise-linear convex lower approximation of $\phi$  with $r$ terms is defined as:
\begin{equation}
\underline{\phi}_r = 
\begin{cases}
0 \qquad &\text{if} \qquad x \in (- \infty, x_1]\\
\underline{a}_ix + \underline{b}_i &\text{if} \qquad x \in [x_i, x_{i+1}], i=1,2,..,r-2\\
x &\text{if} \qquad x \in [x_{r-1}, \infty)
\end{cases}
\label{lower_phi}
\end{equation}
such that

\begin{align*}
& x_1 < x_2 < ... < x_{r-1} \\
& 0 = \underline{a}_0 < \underline{a}_1 < \underline{a}_2 < ... < \underline{a}_{r-2} < \underline{a}_{r-1} = 1 \\
& 1 = \underline{a}_i + \underline{a}_{r-i-1} \quad\quad \forall i \in \left\{0,1, ..., r-1\right\} \\
& \underline{b}_i = \underline{b}_{r-i-1} \hspace{1.38cm} \forall i \in \left\{1, ..., r-2\right\} \\
& \underline{b}_0 = \underline{b}_{r-1} = 0
\end{align*}

\noindent Moreover, $\exists$ $\tilde{x}_1, \tilde{x}_2, ..., \tilde{x}_{r-2} \in \mathbf{R}$ satisfying
$$
x_1 < \tilde{x}_1 < x_2 < \tilde{x}_2 < ... < x_{r-2} < \tilde{x}_{r-2} < x_{r-1}
$$
such that $\underline{a}_ix + \underline{b}_i$ is tangent to $\phi$ at $\tilde{x}_i$.

The maximum approximation error $\epsilon_r$ of this piecewise-linearization occurs at the break points $x_1, ..., x_{r-1}$ (for a constructive algorithm of the above coefficients, refer to \cite{hsiung_kim_boyd_2007}). This piecewise-linearization can then be used to safely approximate a two-term posynomial.

Letting $h= \log(e^{y_1} + e^{y_2})$ be a two term posynomial in log-space, where $y_1 = \vec{a}_1\vec{x} + b_1$ and $y_2 = \vec{a}_2\vec{x} + b_2$, the unique best r-term piecewise-linear lower approximation is: 
\begin{equation}
\begin{aligned}
\underline{h_r} = \max \{&\underline{a}_{r-1}y_1 + \underline{a}_0y_2 + \underline{b}_0, \underline{a}_{r-2}y_1 + \underline{a}_1y_2 + \underline{b}_1, \underline{a}_{r-3}y_1 + \underline{a}_2y_2 + \underline{b}_2,\ ...,\\
 & \underline{a}_{1}y_1 + \underline{a}_{r-2}y_2 + \underline{b}_{r-2}, \underline{a}_0y_1 + \underline{a}_{r-1}y_2 + \underline{b}_{r-1}\}
\end{aligned}
\end{equation}
while its unique best r-term piecewise-linear upper approximation is: 
\begin{equation}
\overline{h_r} = \underline{h_r} + \epsilon_r
\end{equation}
where $\underline{a}_{0}, \underline{a}_{1}, ..., \underline{a}_{r-1}$ and $\underline{b}_{0}, \underline{b}_{1}, ..., \underline{b}_{r-1}$ are as given in equation \eqref{lower_phi}, and $\epsilon_r$ is the maximum error between $\phi$ and $\underline{\phi}_r$.

Since $\overline{h}_r \geq h$, then each posynomial in the set $\mathbf{N}$ can be safely approximated by its own $\overline{h_r}$. Replacing the two term posynomial constraints by their piecewise-linear lower approximation will lead to a relaxed formulation, and thus the difference between the safe formulation's solution and the relaxed formulation's solution is an indication of how good an approximation is, and whether the number of piecewise-linear terms $r$ should be further increased or not.

Because a piecewise-linear constraint can be represented as a set of linear constraints, these two-term posynomial approximations can be transformed to GP. This GP can be made less conservative by increasing the number of piecewise-linear terms.
\section{Robust Large Posynomials} \label{k_term}
Having introduced improved techniques for monomials and two-term posynomials, we now turn to better approximations for posynomial constraints with more than two terms (those associated with $\mathbf{P}$). Two novel methodologies for approximating large posynomials will be presented: the first is only able to transform posynomials whose coefficients are uncertain, while the second can transform posynomials whose coefficients and exponents are uncertain.

\subsection{Linearized Perturbations Formulation} \label{uncertain_coeff}
In the majority of engineering-design constraints, exponents are derived with certainty from physical laws or dimensional analysis, and so programs where uncertainty is only present in the coefficients are common. Let

\begin{displaymath}
\vec{b} = \vec{b}^0 + \textstyle{\sum}_{l=1}^{L}\vec{b}^l\zeta_l
\end{displaymath}
where $\vec{\zeta} \in \mathcal{Z}$ is as given by equation \eqref{perturbation_set}.
The second set of constraints in equation \eqref{categorizedForm} is then equivalent to: 

$$
\begin{aligned}
&\max_{\vec{\zeta} \in \mathcal{Z}} \left\{\textstyle{\sum}_{k \in S_{i,j}} e^{\vec{a_{ik}}\vec{x} + b^0_{ik}}e^{\textstyle{\sum}_{l=1}^{L}b^l_{ik}\zeta_l} \right\} &&\leq e^{t_{ij}} &&\forall (i, j) \in \mathbf{P}\\
\Leftrightarrow &\max_{\vec{\zeta} \in \mathcal{Z}} \left\{\textstyle{\sum}_{k \in S_{i,j}}\left(\textstyle{\prod}_{l=1}^{L}e^{b^l_{ik}\zeta_l}\right) e^{\vec{a_{ik}}\vec{x} + b^0_{ik}} \right\} &&\leq e^{t_{ij}} &&\forall (i, j) \in \mathbf{P}\\
\end{aligned}
$$
Thus by applying the following change of variable
$$v_{i,j}^k = e^{\vec{a_{ik}}\vec{x} + b^0_{ik}} \qquad \forall k \in S_{i,j}$$
the constraint above can be rewritten as:
\begin{equation}
\max_{\vec{\zeta} \in \mathcal{Z}} \left\{\textstyle{\sum}_{k \in S_{i,j}}\left(\textstyle{\prod}_{l=1}^{L}e^{b^l_{ik}\zeta_l}\right) v_{i,j}^k \right\} \leq e^{t_{ij}} \qquad \forall (i, j) \in \mathbf{P}
\label{linearCon_expPerts}
\end{equation}

Although the constraints in \eqref{linearCon_expPerts} are linear in terms of $\vec{v_{i,j}}$, the perturbations are not affine but exponential, and should be linearized to improve tractability.
 
\subsubsection{Linearizing Perturbations}
The convexity of exponential perturbations (see Figure \ref{perts_convexity}) implies that there exists some half-space $[\vec{f}_{i,j}^k]^T\vec{\zeta} + g_{i,j}^k$ such that 
$$[\vec{f}_{i,j}^k]^T\vec{\zeta} + g_{i,j}^k \geq \textstyle{\prod}_{l=1}^{L}e^{b^l_{ik}\zeta_l} \qquad \forall (i, j) \in \mathbf{P} \qquad \forall k \in S_{i,j}$$
on some finite domain for $\vec{\zeta}$.

\begin{figure}[H]
\captionsetup{justification=centering, font=small}
\begin{center}
\includegraphics[scale=0.7]{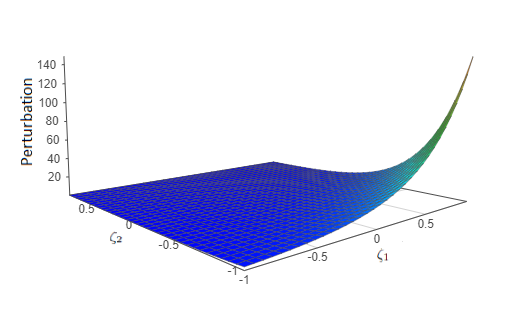}
\end{center}
\caption{Example of the convexity of exponential perturbations.}
\label{perts_convexity}
\end{figure}
 
Accordingly, a possible safe approximation of the constraints in equation \eqref{linearCon_expPerts} is:
\begin{equation}
\max_{\vec{\zeta} \in \mathcal{Z}} \left\{\textstyle{\sum}_{k \in S_{i,j}}\left([\vec{f}_{i,j}^k]^T\vec{\zeta}\right)v_{i,j}^k \right\} + \textstyle{\sum}_{k \in S_{i,j}}g_{i,j}^k v_{i,j}^k \leq e^{t_{ij}} \qquad \forall (i, j) \in \mathbf{P}
\label{linearCon_linPerts}
\end{equation}

To construct the half-space $[\vec{f}_{i,j}^k]^T\vec{\zeta} + g_{i,j}^k$, and taking into consideration the fact that $-1 \leq \zeta_l \leq 1$ for $l = 1,...,L$, we suggest the following algorithm
\\
\begin{enumerate}
	\item Find the list of vertices $\mathcal{V}$ of the unit box in $\mathbf{R}^L$
	\item Find the list of values $\mathcal{O}$ of $\textstyle{\prod}_{l=1}^{L}e^{b^l_{ik}\zeta_l}$ at the vertices $\mathcal{V}$, note that the $i^{th}$ vertex corresponds to the $i^{th}$ value.
	\item Find the maximum $M_k$ and minimum $m_k$ of $\mathcal{O}$ and their corresponding vertices $\vec{\zeta}_M$ and $\vec{\zeta}_m$.
	\item Solve the least-squares problem 
	$$\min_{\vec{f}_{i,j}^k,\ g_{i,j}^k} \sqrt{\textstyle{\sum}_{\alpha=1}^{|\mathcal{O}|}([\vec{f}_{i,j}^k]^T\mathcal{V}_{\alpha} + g_{i,j}^k - \mathcal{O}_{\alpha})^2}$$
	such that $[\vec{f}_{i,j}^k]^T\vec{\zeta}_M + g_{i,j}^k = M_k$ and $[\vec{f}_{i,j}^k]^T\vec{\zeta}_m + g_{i,j}^k = m_k$
\end{enumerate}
\ \\
Figure \ref{perts_convexity} shows a 2D example where the half-space passes through the highest point (1, -1) and the lowest point (-1, 1).

This half-space is safe for all types of uncertainty sets (normalized to fit into a unit box), although more-specific half-spaces can be constructed for a specific uncertainty set. For instance, with elliptical uncertainty sets, the vertices in the first step might be replaced by an equidistant set of samples from the surface of the uncertainty set.

\subsubsection{Signomial Programming Compatible Constraint}
Equation \eqref{linearCon_linPerts} can be transformed with robust linear programming techniques. Unfortunately, because some components of $\vec{f}_{i,j}^k$ might not be positive, the resulting set of robust constraints is not always GP compatible. Appendix \ref{sigProg} defines signomial programming and clarifies how it is useful to our discussion; in short, it allows the specification and efficient solving of a difference-of-convex program with these negative coefficients.

Signomial problems generally do not need a starting point to solve, but specifying one can speed convergence, and in robust programming there are two quick and obvious candidates for a starting point: the nominal solution of the original program, or the conservative solution of the GP-compatible formulation in Section \ref{Conservative}.

The number of additional possibly SP constraints needed per uncertain posynomial is equal to the number of uncertain parameters in that posynomial. Decoupling the monomials in each posynomial is equivalent to evaluating the exponential perturbations at its largest value, however, in \eqref{linearCon_linPerts}, the perturbation will be evaluated on a lower point on the half space. As a result, this formulation is guaranteed to be less conservative than that of Section \ref{Conservative}, or the same if both are exact.

\subsection{Best Pairs Formulation}\label{uncertain_exps}
We now discuss a framework for robustifying constraints with uncertain coefficients and exponents. Consider the constraints in $\mathbf{P}$
\begin{equation}
\max_{\vec{\zeta} \in \mathcal{Z}} \left\{\textstyle{\sum}_{k \in S_{i,j}} e^{\vec{a_{ik}}\left(\zeta\right)\vec{x} + b_{ik}\left(\zeta\right)} \right\} \leq e^{t_{ij}} \qquad \forall (i, j) \in \mathbf{P}
\label{second_set}
\end{equation}
Let $\mathcal{P}_{i,j}$ be the set of permutations on $S_{i,j}$, and let $\phi_k$ be the image of an element $k \in S_{i,j}$ under the permutation $\phi \in \mathcal{P}_{i,j}$. Accordingly, \eqref{second_set} is equivalent to: 
\begin{equation}
\max_{\vec{\zeta} \in \mathcal{Z}} \left\{\textstyle{\sum}_{k \in S_{i,j}} e^{\vec{a_{i\phi_k}}\left(\zeta\right)\vec{x} + b_{i\phi_k}\left(\zeta\right)} \right\} \leq e^{t_{ij}} \qquad \forall (i, j) \in \mathbf{P} \qquad \phi \in \mathcal{P}_{i,j}
\label{permutation_k_term}
\end{equation}

Our goal is to maximize pairs of monomials while trying to choose the least conservative combination. Let 
$$
\vec{a_{iS_{i,j}^k}}\left(\zeta\right)\vec{x} + b_{iS_{i,j}^k}\left(\zeta\right) = \mathcal{L}^k_{i,j}\quad \forall (i,j) \in \mathbf{P}
$$
where $S_{i,j}^k$ is the $k^{th}$ element of $S_{i,j}$, and assume that $|S_{i,j}|$ is even, such that
$$
\displaystyle \max_{\vec{\zeta} \in \mathcal{Z}} \left\{\textstyle{\sum}_{k=1}^{|S_{i,j}|} e^{\mathcal{L}^{\phi_k}_{i,j}}\right\} \leq \textstyle{\sum}_{k=1}^{|S_{i,j}|/2} \displaystyle \max_{\vec{\zeta} \in \mathcal{Z}} \left\{e^{\mathcal{L}^{\phi_{2k-1}}_{i,j}} + e^{\mathcal{L}^{\phi_{2k}}_{i,j}}\right\}
$$
for any permutation $\phi \in \mathcal{P}_{i,j}$. Therefore
$$
\displaystyle \max_{\vec{\zeta} \in \mathcal{Z}} \left\{\textstyle{\sum}_{k=1}^{|S_{i,j}|} e^{\mathcal{L}^{\phi_k}_{i,j}}\right\} \leq \min_{\phi \in \mathcal{P}_{i,j}}\left\{\textstyle{\sum}_{k=1}^{|S_{i,j}|/2} \displaystyle \max_{\vec{\zeta} \in \mathcal{Z}} \left\{e^{\mathcal{L}^{\phi_{2k-1}}_{i,j}} + e^{\mathcal{L}^{\phi_{2k}}_{i,j}}\right\}\right\}
$$
and
\begin{equation}
\min_{\phi \in \mathcal{P}_{i,j}}\left\{\textstyle{\sum}_{k=1}^{|S_{i,j}|/2} \displaystyle \max_{\vec{\zeta} \in \mathcal{Z}} \left\{e^{\mathcal{L}^{\phi_{2k-1}}_{i,j}} + e^{\mathcal{L}^{\phi_{2k}}_{i,j}}\right\}\right\} \leq e^{t_{ij}} \qquad \forall (i, j) \in \mathbf{P}
\label{min_safe_app}
\end{equation}
is a safe approximation of \eqref{permutation_k_term}. From the fact that
$$
\max(a + b) + \max(c + d) = \max(c + d) +  \max(a + b) = \max(b + a) + \max(d + c)
$$
we see that some perturbations result in identical safe approximations. The permutation set can be modified to contain differing permutations only; let $\mathcal{P}_{i,j}$ represent the set of differing permutations. The number of differing permutations is given by:
\begin{equation}
|\mathcal{P}_{i,j}| = \frac{\binom{|S_{i,j}|}{2}\binom{|S_{i,j}|-2}{2}\binom{|S_{i,j}|-4}{2}...\binom{4}{2}}{(|S_{i,j}|/2)!}
\label{no_pert_even}
\end{equation}
If the number of monomial terms for a given posynomial is large, then $|\mathcal{P}_{i,j}|$ might become quite large. Therefore, we might choose to work with a subset $\hat{\mathcal{P}}_{i,j}$ of $\mathcal{P}_{i,j}$, where the permutations are either chosen depending on the structure of the posynomial or randomly selected. Constraints associated with $\mathbf{P}$ will be replaced by: 
\begin{equation}
\textstyle{\sum}_{k=1}^{|S_{i,j}|/2} {\displaystyle \max_{\vec{\zeta} \in \mathcal{Z}}} \left\{e^{\mathcal{L}^{\phi_{2k-1}}_{i,j}} + e^{\mathcal{L}^{\phi_{2k}}_{i,j}}\right\}\leq e^{t_{ij}} \qquad \forall (i, j) \in \mathbf{P} \qquad \phi \in \hat{\mathcal{P}}_{i,j}
\label{two_term_max_even}
\end{equation}
The constraints in \eqref{two_term_max_even} are safe approximations for those in \eqref{permutation_k_term}, and equivalent to:
\begin{equation}
\begin{aligned}
&\textstyle{\sum}_{k=1}^{|S_{i,j}|/2} e^{z_{ij}^k} &&\leq e^{t_{ij}} \qquad &&\forall (i, j) \in \mathbf{P}\\
&\max_{\vec{\zeta} \in \mathcal{Z}} \left\{e^{\mathcal{L}^{\phi_{2k-1}}_{i,j}} + e^{\mathcal{L}^{\phi_{2k}}_{i,j}}\right\} &&\leq e^{z_{ij}^k} &&\forall (i, j) \in \mathbf{P} \qquad \forall k \in \left\{1,..,|S_{i,j}|/2\right\}
\end{aligned}
\label{two_term_even}
\end{equation}
Accordingly, the robust counterparts in Section \ref{RGP} will be safely approximated by:
\begin{equation}
\begin{aligned}
&\min &&f_0(x)\\
&\text{s.t.} &&\textstyle{\sum}_{j=1}^{N_e^i} e^{t_{ij}} &&\leq 1 \quad &&\forall i \in 1,..,m\\
& &&\max_{\vec{\zeta} \in \mathcal{Z}} \left\{e^{\mathcal{L}_{i,j}^1} \right\} &&\leq e^{t_{ij}} &&\forall (i,j) \in \mathbf{M}\\
& &&\max_{\vec{\zeta} \in \mathcal{Z}} \left\{e^{\mathcal{L}_{i,j}^1} + e^{\mathcal{L}_{i,j}^2} \right\} &&\leq e^{t_{ij}} &&\forall (i,j) \in \mathbf{N}\\
& &&\textstyle{\sum}_{k=1}^{|S_{i,j}|/2} e^{z_{ij}^k} &&\leq e^{t_{ij}} \quad &&\forall (i,j) \in \mathbf{P}\\
& &&\max_{\zeta \in \mathcal{Z}} \left\{e^{\mathcal{L}^{\phi_{2k-1}}_{i,j}} + e^{\mathcal{L}^{\phi_{2k}}_{i,j}}\right\} &&\leq e^{z_{ij}^k} &&\forall (i,j) \in \mathbf{P}\\
& && && &&\forall k \in \left\{1,..,|S_{i,j}|/2\right\}\\
& && && && \phi \in \hat{\mathcal{P}}_{i,j}\\
\end{aligned}
\label{GP_counterparts_tractable}
\end{equation}

Each constraint in \eqref{GP_counterparts_tractable} is composed of monomials, two-term posynomials, or data-deprived large posynomials, and so the program is tractable.

Our final step is finding good permutations, i.e. permutations that will make our solution less conservative. For that sake, the following lemma will be utilized

\begin{lemma}
Considering the two optimization problems 
$$
\begin{aligned}
&\min f(\vec{x})\\
&\text{s.t. } \mathcal{S}_i(\vec{x}) \leq 0 \qquad i = 1,2,...,n
\end{aligned}
$$
and
$$
\begin{aligned}
&\min f(\vec{x})\\
&\text{s.t. } \mathcal{T}_i(\vec{x}) \leq 0 \qquad i = 1,2,...,n
\end{aligned}
$$
let $\vec{x}_1$ and $\vec{x}_2$ be their respective solutions. If $\mathcal{T}_i(\vec{x}_1) \leq \mathcal{S}_i(\vec{x}_1) \ \ \forall i \in \left\{1,2,...,n\right\}$, then $f(\vec{x}_2) \leq f(\vec{x}_1)$
\label{the_lemma}
\end{lemma}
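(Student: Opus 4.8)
\textbf{Proof plan for Lemma \ref{the_lemma}.} The plan is to argue purely through feasibility: show that the optimizer $\vec{x}_1$ of the first problem is in fact feasible for the second problem, and then invoke optimality of $\vec{x}_2$.

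First I would record what it means for $\vec{x}_1$ to be a solution of the first program: in particular it is feasible, so $\mathcal{S}_i(\vec{x}_1) \leq 0$ for every $i \in \{1,\dots,n\}$. Next I would combine this with the hypothesis $\mathcal{T}_i(\vec{x}_1) \leq \mathcal{S}_i(\vec{x}_1)$, valid for every $i$, to obtain by transitivity that $\mathcal{T}_i(\vec{x}_1) \leq 0$ for every $i$. This is exactly the statement that $\vec{x}_1$ satisfies all the constraints of the second program, i.e.\ $\vec{x}_1$ is feasible for the second problem.

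Finally, since $\vec{x}_2$ is an optimal solution of the second problem and $\vec{x}_1$ is a feasible point of that same problem, minimality of the objective value at $\vec{x}_2$ over the feasible set gives $f(\vec{x}_2) \leq f(\vec{x}_1)$, which is the claim.

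There is essentially no hard step here; the lemma is a one-line feasibility-containment argument. The only point that deserves a word of care is the interpretation of ``solution'': I would state explicitly that we take $\vec{x}_1,\vec{x}_2$ to be (global) minimizers of their respective problems, so that ``feasible point $\Rightarrow$ objective $\geq$ optimal value'' is available; if only local optimality or approximate optimality were assumed the conclusion would need a corresponding qualification. I would also note for completeness that the lemma does not require $\mathcal{T}_i \leq \mathcal{S}_i$ everywhere, only at the single point $\vec{x}_1$, which is precisely how it will be applied when comparing two candidate safe approximations at the incumbent solution.
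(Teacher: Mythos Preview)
Your proposal is correct and follows exactly the same feasibility-containment argument as the paper: chain $\mathcal{T}_i(\vec{x}_1) \leq \mathcal{S}_i(\vec{x}_1) \leq 0$ to get $\vec{x}_1$ feasible for the second problem, then invoke optimality of $\vec{x}_2$. Your additional remarks on global versus local optimality and on the hypothesis being needed only at $\vec{x}_1$ are apt clarifications that the paper omits.
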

\begin{proof}
$$\mathcal{T}_i(\vec{x}_1) \leq \mathcal{S}_i(\vec{x}_1) \leq 0 \quad \forall i \in \left\{1,2,...,n\right\} $$ 
So $\vec{x}_1$ is a feasible solution for the second optimization problem. But since $\vec{x}_2$ is the optimal solution of the second problem, then
$$f(\vec{x}_2) \leq f(\vec{x}_{feasible}) \implies f(\vec{x}_2) \leq f(\vec{x}_1)$$
\ 
\end{proof}

Starting from the above lemma, the algorithm below illustrates how relatively good permutations might be chosen:
\\
\begin{enumerate}
\item Randomly choose the permutations $\phi$ from the set $\hat{\mathcal{P}}_{i,j}$ $\forall (i,j) \in \mathbf{P}$
\item Solve the optimization problem in \eqref{GP_counterparts_tractable}, and let $\vec{x}_1$ be the solution
\item Repeat
\begin{enumerate}
\item $\forall (i,j) \in \mathbf{P}$, select the new permutations $\phi \in \hat{\mathcal{P}}_{i,j}$ such that $\phi$ minimizes $\textstyle{\sum}_{k=1}^{|S_{i,j}|/2} {\displaystyle \max_{\vec{\zeta} \in \mathcal{Z}}} \left\{e^{\mathcal{L}^{\phi_{2k-1}}_{i,j}} + e^{\mathcal{L}^{\phi_{2k}}_{i,j}}\right\}\bigg\rvert_{\vec{x}_i}$
\item Solve the optimization problem in \eqref{GP_counterparts_tractable}, and let $\vec{x}_i$ be the solution
\item If $\vec{x}_i = \vec{x}_{i-1}$ : break
\end{enumerate}
\end{enumerate}
\ \\
Although the final solution might not be the globally least-conservative over all possible permutations it is guaranteed to be the locally least-conservative given the descent algorithm based on Lemma \eqref{the_lemma}. The Best Pairs Formulation is guaranteed to be less-conservative than the Two-term formulation because it involves less monomial decoupling and fewer piecewise-linear approximations.
\section{Applications} \label{apps}
In this section, uncertain design problems are transformed into robust programs to demonstrate and compare the methods above. The first problem (a simple aircraft model) is used mostly to clarify the concept, while the second (a large-scale solar-aircraft design) shows the effectiveness of the tractable robust GP-approximation methods proposed.

\subsection{Simple Flight Design}
Hoburg describes each of the sub-models constituting this simple wing design in \cite{hoburg_abbeel_2014}, the resulting GP model is as follows:

\begin{equation}
\begin{aligned}
	&\min &&D\\
	& s.t. &&C_D \geq \frac{(CDA0)}{S} + kC_fS_{wet} + \frac{C_L^2}{\pi Ae}\\
	& &&W_W \geq W_{W_2} S + \frac{W_{W_1}N_{ult}A^{1.5}\sqrt{W_0WS}}{\tau}\\
	& &&D \geq 0.5\rho SC_DV^2\\
	& &&Re \leq \frac{\rho}{\mu}V\sqrt{\frac{S}{A}}\\
	& &&C_f \geq \frac{0.074}{Re^{0.2}}\\
	& &&W \leq 0.5\rho S C_LV^2\\
	& &&W \leq 0.5\rho S C_{Lmax}V_{min}^2\\
	& &&W \geq W_0 + W_W
\end{aligned}
\label{simple_wing}
\end{equation}
\ \\
This 8-constraint problem has two design variables ($A, S$), seven dependent free variables ($C_D, C_L, C_f, Re, W, W_W, V$), and thirteen uncertain parameters detailed in Table \ref{uncertain_simple_wing}. Note that the uncertainties specified in the table are the principal-axis widths of either the box or elliptical uncertainty sets and that the uncertainties in this problem are only in the coefficients. The constraints in \eqref{simple_wing} are predominantly monomials and two-term posynomials, with only one posynomial of more than two terms.
\ \\
\begin{center}
\begin{tabular}{ |P{7em}|P{3.5cm}|P{5.2cm}|}
\hline
Uncertain Parameter & Value & Description \\
\hline
$(CDA0)$ & $0.0350 $ $[m^2]$ $\pm 42.8\%$ & Fuselage Drag Area\\
\hline
$k$ & $1.170 \pm 31.1\%$ & Form Factor \\
\hline
$S_{wet}$ & $2.075 \pm 3.61\%$ & Wetted Area Ratio\\  
\hline
$e$ & $0.9200 \pm 7.60 \%$ & Oswald Efficiency Factor\\  
\hline
$W_{W_2}$ & $60.00 $ $[Pa]$ $\pm 66.0\%$ & Wing Weight Coefficient 2\\  
\hline
$W_{W_1}$ & $12.00e^{-5} $ $[\frac{1}{m}]$ $\pm 60.0\%$ & Wing Weight Coefficient 1\\  
\hline
$N_{ult}$ & $3.300 \pm 33.3\%$ & Ultimate Load Factor\\  
\hline
$W_0$ & $6250$ $[N]$ $\pm 60.0\%$ & Aircraft Weight Excluding Wing\\  
\hline
$\tau$ & $0.1200 \pm 33.3\%$ & Airfoil Thickness to Chord Ratio\\  
\hline
$\rho$ & $1.230$ $[\frac{kg}{m^3}]$ $\pm 10.0\%$ & Density of Air\\  
\hline
$\mu$ & $1.775e^{-5}$ $[\frac{kg}{ms}]$ $ \pm 4.22\%$ & Viscosity of Air\\  
\hline
$C_{Lmax}$ & $1.600 \pm 25.0\%$ & Maximum Lift coefficient of Wing\\  
\hline
$V_{min}$ & $25.00$ $[\frac{m}{s}]$ $ \pm 20.0\%$ & Takeoff Speed\\  
\hline
\end{tabular}
\captionof{table}{Uncertain parameters in the simple wing design.}
\label{uncertain_simple_wing}
\end{center}

The problem is first solved for different sizes ($\Gamma$ values) of box and elliptical uncertainty sets (defined in Appendix \ref{LP_to_GP}), where the number of piecewise-linear terms is chosen such that the relative error between the solution of the upper tractable approximation and that of the lower tractable approximation is less than $0.1\%$. The design variables are then fixed for each solution so that the design can be simulated for 1000 realizations of the uncertain parameters to examine average design performance.\\

\begin{figure}[ht]
    \centering
    \captionsetup{justification=centering, font=small}
    \begin{subfigure}{0.49\textwidth}
        \centering
        \includegraphics[height=1.85in]{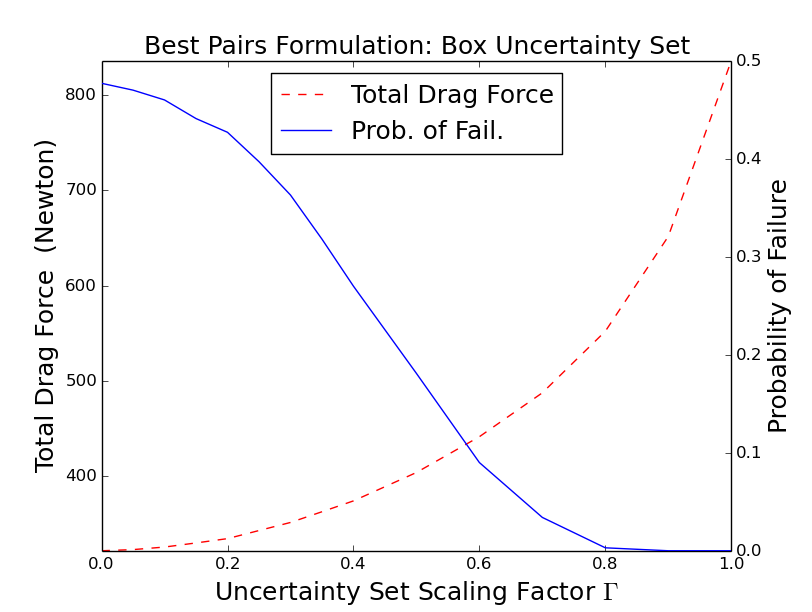}
    \end{subfigure}%
    ~ 
    \begin{subfigure}{0.49\textwidth}
        \centering
        \includegraphics[height=1.85in]{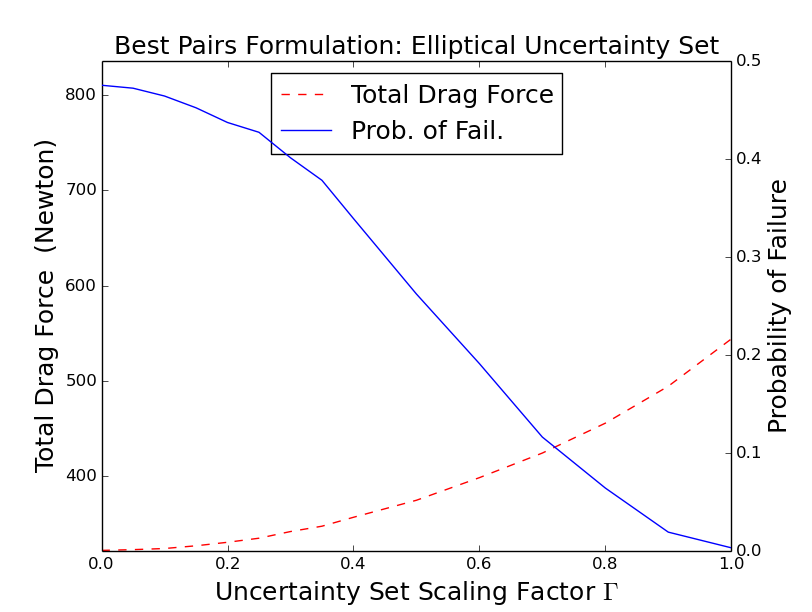}
    \end{subfigure}
    \caption{Performance of the optimal robust simple wing, using the Best Pairs formulation, as a function of $\Gamma$ for different uncertainty sets.}
    \label{simple_wing_var_gamma}
\end{figure}

We can see from Figure \ref{simple_wing_var_gamma} that probability of failure goes to zero as $\Gamma$ increases. Descriptions of how $\Gamma$ determines the size of uncertainty sets are in Appendices \ref{box_linear_app} and \ref{ellip}. For a given level of risk, the average performance of designs robust to an elliptical uncertainty set is better than for those using a box uncertainty set. In other words, modeling the uncertainty using an elliptical uncertainty set would guarantee a safe design and better performance when compared to using a box uncertainty set. Moreover, using margins would in the best case be as good as using a box uncertainty set, and therefore will lead in an inferior performance.

Table \ref{simp_table} and Figure \ref{compare_simple_wing} compare the results of robustifying and solving the simple wing design problem with different methodologies. The number of piecewise-linear terms per two-term posynomial needed to reach a $0.1\%$ tolerance is much higher for the Two Term formulation from \cite{hsiung_kim_boyd_2007} than for our formulations. Correspondingly, the total number of constraints needed by the two-term formulation is also much larger.\\

\begin{center}
\begin{tabular}{ |m{7em}|m{1.8cm}|m{1.5cm}|m{2.2cm}|m{1.8cm}| }
\hline
Method & Uncertainty Set & Relative Error [$\%$] & Number of PWL Sections & Number of Constraints \\
\hline
\multirow{2}{7em}{Two Term} & Box & 0.1 & 79 & 327\\ 
& Elliptical & 0.1 & 70 & 291\\
\hline
\multirow{2}{7em}{Simple Conservative} & Box & 0 & 0 & 16\\ 
& Elliptical & 0 & 0 & 16 \\
\hline
\multirow{2}{7em}{Linearized Perturbations} & Box & 0 & 0 & 16\\ 
& Elliptical & 0.1 & 31 & 49 \\  
\hline
\multirow{2}{7em}{Best Pairs} & Box & 0 & 0 & 16\\ 
& Elliptical & 0.1 & 40 & 93 \\
\hline
Deterministic & N/A & N/A & N/A & 8\\
\hline
\end{tabular}
\captionof{table}{Comparing the simple wing results using different methodologies.}
\label{simp_table}
\end{center}

\begin{figure}[ht]
    \centering
    \captionsetup{justification=centering, font=small}
    \begin{subfigure}{0.499\textwidth}
        \centering
        \includegraphics[height=1.85in]{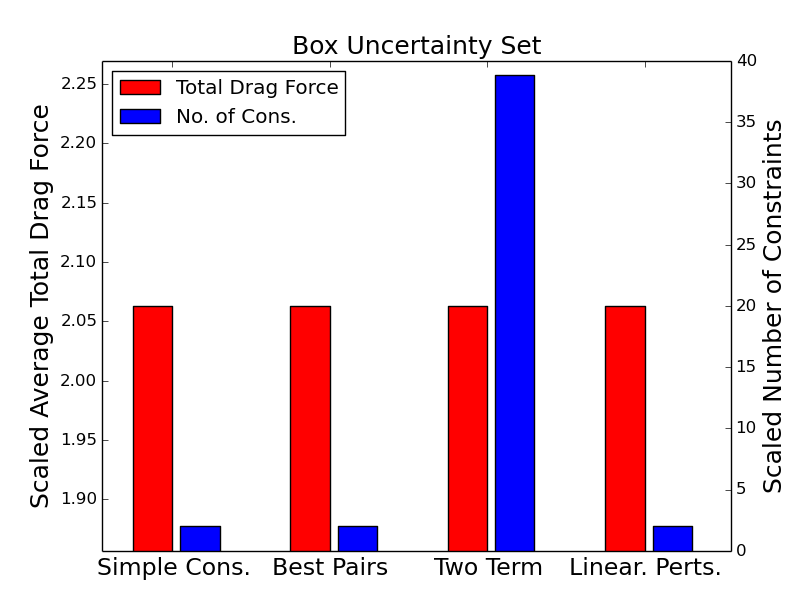}
    \end{subfigure}%
    ~ 
    \begin{subfigure}{0.49\textwidth}
        \centering
        \includegraphics[height=1.85in]{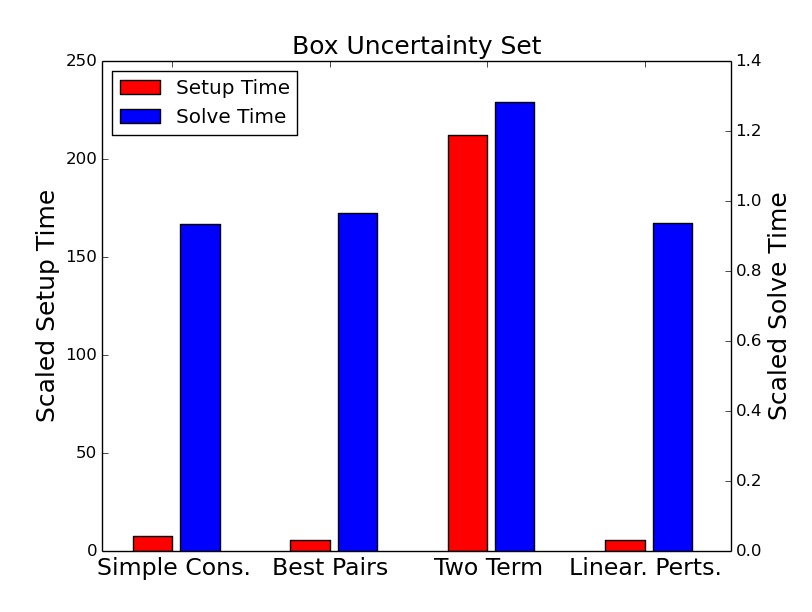}
    \end{subfigure}
    ~
    \begin{subfigure}{0.499\textwidth}
        \centering
        \includegraphics[height=1.85in]{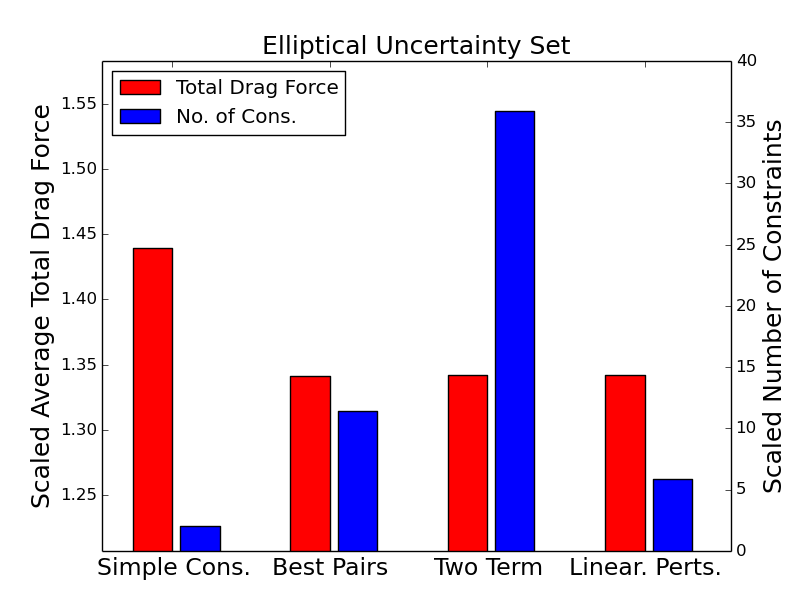}
    \end{subfigure}%
    ~ 
    \begin{subfigure}{0.49\textwidth}
        \centering
        \includegraphics[height=1.85in]{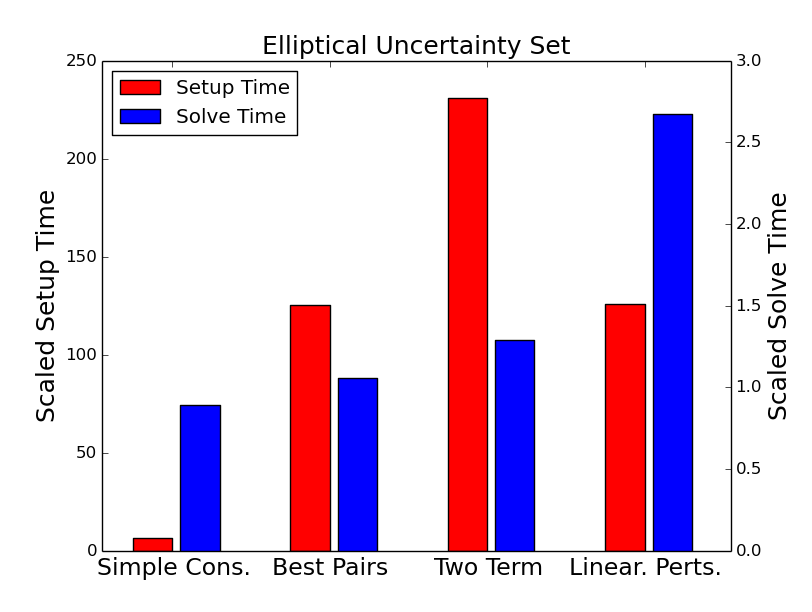}
    \end{subfigure}
    \caption{Robust simple wing design results relative to the nominal design problem.}
    \label{compare_simple_wing}
\end{figure}

Our methodologies achieved an exact solution for the box uncertainty set as condition $C_3$ (Section \ref{Conservative}) is satisfied, and therefore no piecewise-linearization is needed. This explains the identical results of our three methodologies for the box uncertainty set in Figures \ref{compare_simple_wing} and \ref{simple_wing_var_pwl}, constrasting with the more than 70 piecewise-linear sections needed by the Two-term formulation to achieve a $0.1\%$ tolerance. Condition $C_3$ is not satisfied for the elliptical uncertainty set, and so here our simple method is noticeably more conservative than those using piecewise-linear approximations.

\begin{figure}[ht]
    \centering
    \captionsetup{justification=centering, font=small}
    \begin{subfigure}{0.49\textwidth}
        \centering
        \includegraphics[height=1.85in]{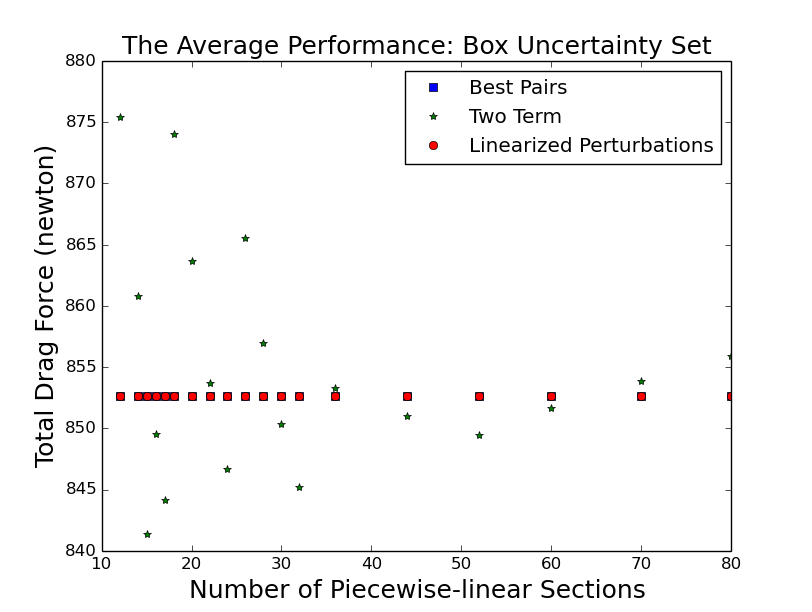}
    \end{subfigure}%
    \begin{subfigure}{0.49\textwidth}
        \centering
        \includegraphics[height=1.85in]{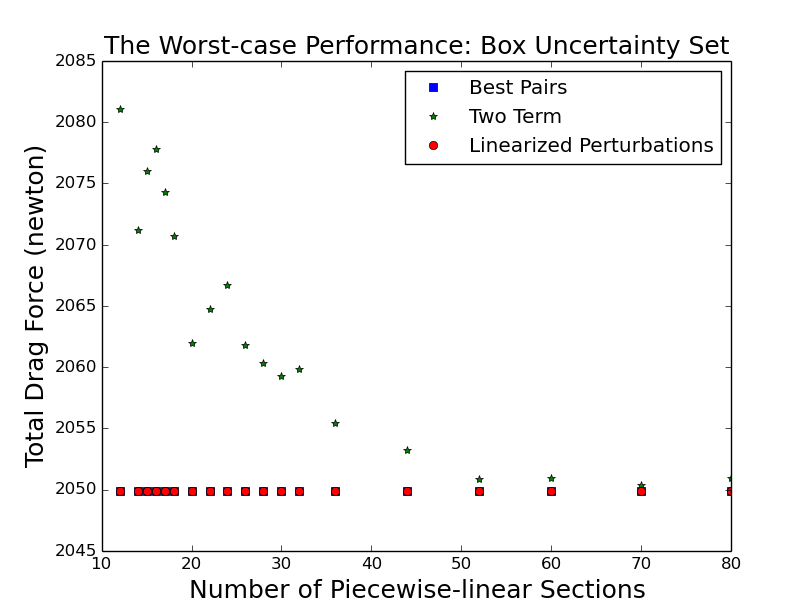}
    \end{subfigure}
    \begin{subfigure}{0.49\textwidth}
        \centering
        \includegraphics[height=1.85in]{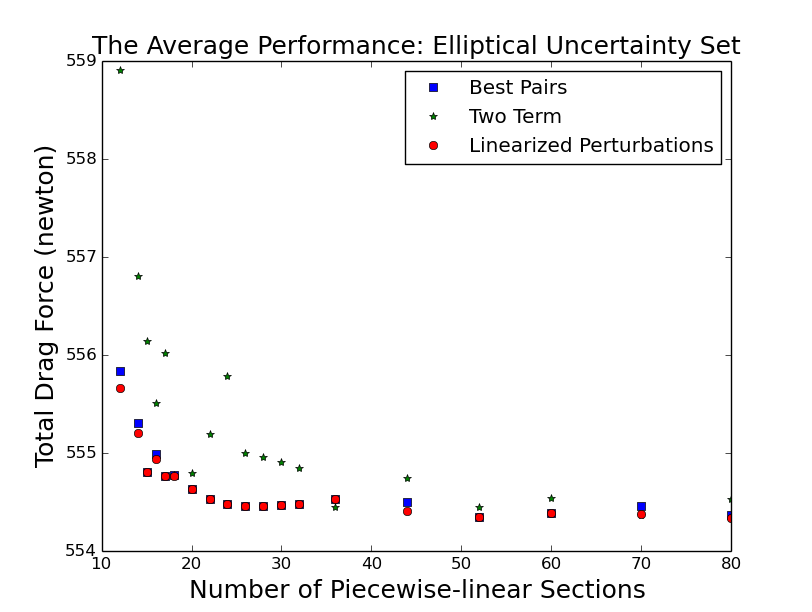}
    \end{subfigure}%
    \begin{subfigure}{0.49\textwidth}
        \centering
        \includegraphics[height=1.85in]{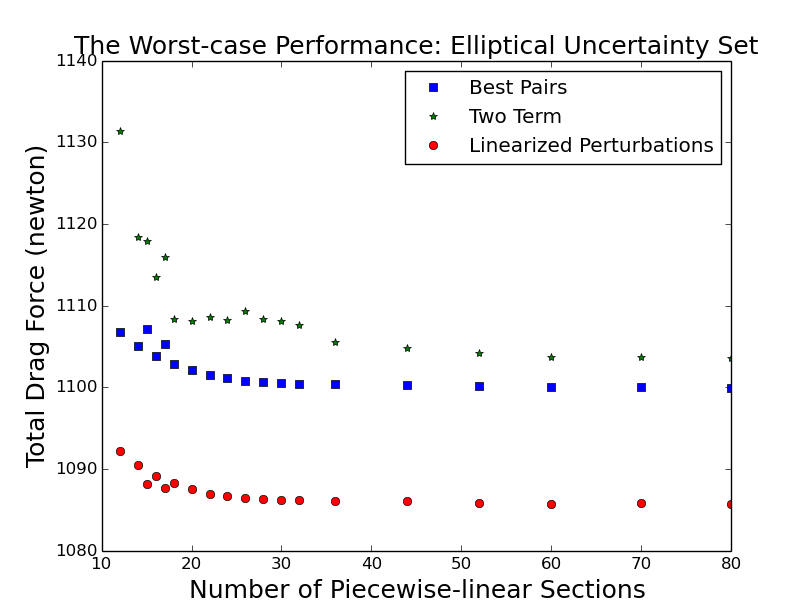}
    \end{subfigure}
    \caption{The performance of the simple wing design as a function of the number of piecewise-linear terms for different methodologies.}  
    \label{simple_wing_var_pwl}
\end{figure}

\subsection{UAV Solar Model}
In previous GP modeling work, Burton presented a GP-compatible solar-powered aircraft model \cite{burton_hoburg} designed to fulfill the requirements in Table \ref{mission_requirement}. Sizing of such long-endurance aircraft is complicated because of the multifaceted interaction between aerodynamics, structural weight, solar energy, wind speed, and other disciplines and environmental models. For this study, Burton chose a fixed-wing tractor configuration as a baseline for the solar aircraft architecture with constant tapered wing and a conventional tail with a single tail boom extending from the wing. The batteries responsible for storing the solar energy are held in the wings while the solar cells are placed along the wings and possibly on the horizontal tail as shown in the simple diagram in Figure \ref{the_aircraft}. To evaluate the size and performance of this aircraft, the coupled environmental, structural, and performance models are expressed in a geometric programming form and combined to form an optimization problem that is guaranteed to converge to the global optimum in a fraction of second, where such a speed has not previously been achieved in conceptual sizing studies for Solar UAV problem attempted here. More details about the different models and the effect of each model on the overall design can be found in \cite{burton_hoburg}.

Burton studied the feasibility limits of the solar aircraft by introducing margins to account for uncertainties such as in the wind speed and solar cells efficiency. We proposea better feasibility study by using robust geometric programming. The deterministic model is formed of 2698 constraints including 20 design variables, 875 dependent free variables, and 271 uncertain parameters. Table \ref{uncertain_solar} presents a sample of the uncertain parameters in the solar aircraft design.\\

\begin{table}[ht]
\begin{minipage}[b]{0.38\linewidth}
\centering
\begin{tabu}{P{6em}|P{1.9cm}}
    \tabucline[1pt]{1-6}
    \multicolumn{2}{c}{\textbf{Mission Requirements}} \\ \tabucline[1pt]{1-6}
    Payload & $10$ $lbs$ \\ \hline
    Station Keeping & $90\%$ $winds$\\ \hline
    Endurance & $> 5$ $days$\\ \hline
    Season & $all$ $seasons$\\ \hline
    Altitude & $> 4600$ $m$\\ \hline
    Latitude & $\pm 30^\circ$\\ \hline
   \end{tabu}
    \caption{Mission requirements.}
    \label{mission_requirement}
\end{minipage}\hfill
\begin{minipage}[b]{0.62\linewidth}
\centering
\includegraphics[width=80mm]{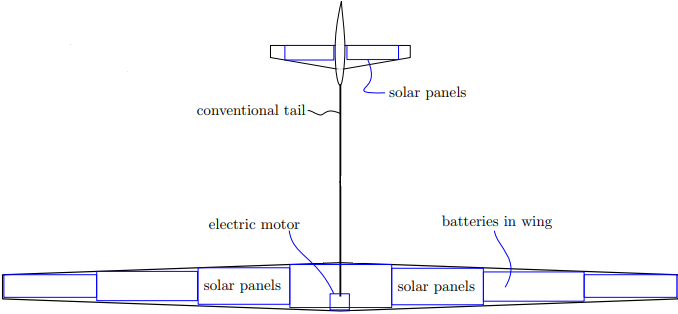}
\captionof{figure}{A simple diagram of the solar aircraft.}
\label{the_aircraft}
\end{minipage}
\end{table}

\begin{center}
\begin{tabular}{ |P{10em}|P{3cm}|P{4cm}|}
\hline
Uncertain Parameter & Value & Description \\
\hline
$\eta_m$ & $0.95 \pm 7\%$ & Motor Efficiency\\
\hline
$h_{batt}$ & $350 \pm 15\%$ $[\frac{Wh}{Kg}]$ & Battery Specific Energy \\
\hline
$\eta_{solar}$ & $0.22 \pm 15\%$ & Solar Cell Efficiency\\
\hline
$V_{wind_{ref}}$ & $100 \pm 3\%$ $[\frac{m}{s}]$ & Reference Wind Speed\\  
\hline
$\eta_{prop}$ & $0.8 \pm 10\%$ & Propeller Efficiency\\
\hline
\end{tabular}
\captionof{table}{A sample of the solar aircraft uncertain parameters.}
\label{uncertain_solar}
\end{center}

This problem was solved for an elliptical uncertainty set for various values of $\Gamma$ with a maximum number of piecewise-linear terms of 50 and a desired relative error of 1\% between the solution of the upper and lower tractable approximations. The average performance was then calculated from 300 realizations.

Table \ref{solar_table} and Figure \ref{compare_solar} show that the number of constraints needed to formulate this approximate RGP with the Two Term formulation is extremely large even at a relative error of 2\%. By comparison, the formulations presented in this paper have a relatively small number of constraints, primarily due to the partitioning of large posynomials into smaller ones.\\

\begin{table}
\begin{center}
\begin{tabular}{ |m{7em}|m{1.8cm}|m{1.5cm}|m{2.2cm}|m{1.8cm}| }
\hline
Method & Uncertainty Set & Relative Error [$\%$] & Number of PWL Sections & Number of Constraints \\
\hline
\multirow{2}{7em}{Two Term} & Box & 2.3 & 50 & 91,599\\ 
& Elliptical & 2.2 & 50 & 91,599\\
\hline
\multirow{2}{7em}{Simple Conservative} & Box & 0 & 0 & 2,896\\ 
& Elliptical & 0 & 0 & 2,896 \\
\hline
\multirow{2}{7em}{Linearized Perturbations} & Box & 0 & 0 & 3,895\\ 
& Elliptical & 0.9 & 31 & 2,899 \\  
\hline
\multirow{2}{7em}{Best Pairs} & Box & 0.7 & 4 & 2,939\\ 
& Elliptical & 0.9 & 32 & 4,433 \\
\hline
Deterministic & N/A & N/A & N/A & 2,837\\
\hline
\end{tabular}
\captionof{table}{Comparing the solar aircraft results using different methodologies.}
\label{solar_table}
\end{center}
\end{table}

\begin{figure}[!ht]
    \centering
    \captionsetup{justification=centering, font=small}
    \begin{subfigure}{0.49\textwidth}
        \centering
        \includegraphics[height=1.81in]{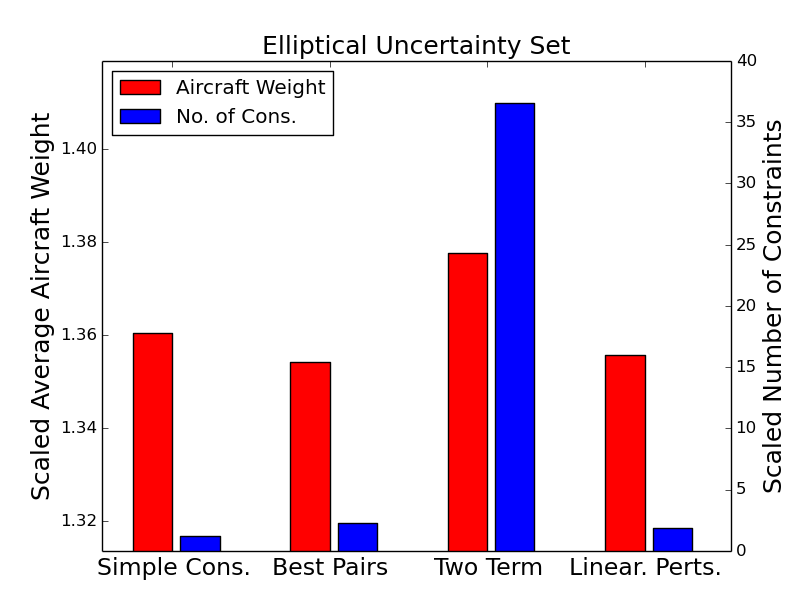}
    \end{subfigure}%
    ~ 
    \begin{subfigure}{0.49\textwidth}
        \centering
        \includegraphics[height=1.81in]{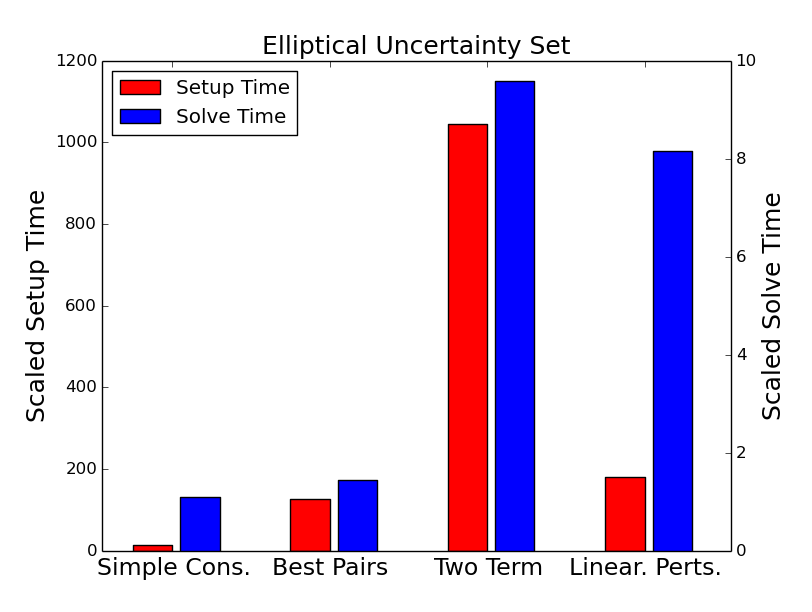}
    \end{subfigure}
    \caption{Robust solar aircraft results scaled by deterministic objective value, number of constraints, and timing.}
    \label{compare_solar}
\end{figure}

\begin{figure}[!htb]
    \centering
    \captionsetup{justification=centering, font=small}
    \includegraphics[scale=0.56]{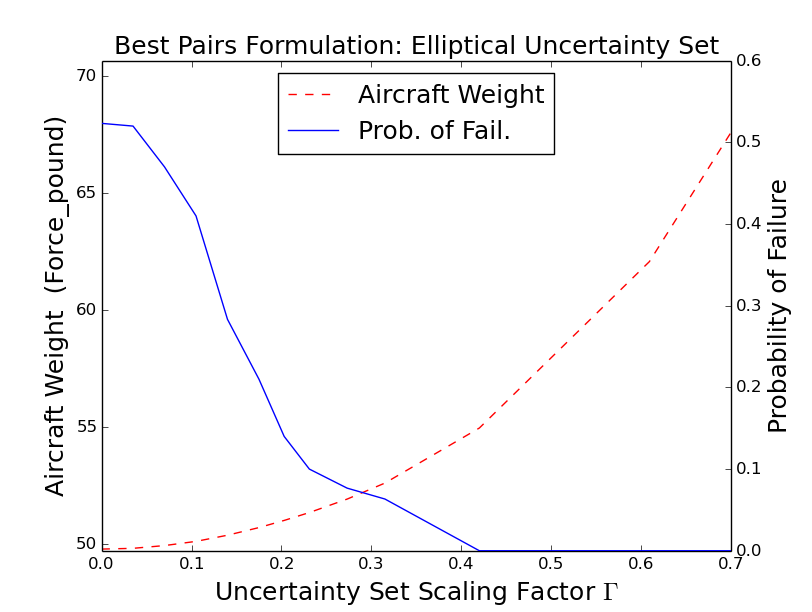}
    \caption{Performance of the solar aircraft design based on the Best Pairs formulation, as a function of $\Gamma$.}
    \label{solar_var_gamma}
\end{figure}

Figure \ref{solar_var_gamma} shows that the probability of failure in the robust design goes to zero as $\Gamma$ increases, while the deterministic design fails with a probability of 0.7.

As shown in Figure \ref{compare_solar}, the programs influenced by our formulations take much less time to  setup and solve than the Two Term formulation (with the exception of the Linearized Perturbations solve), and all obtain better performing designs. Figure \ref{solar_var_pwl} additionally shows the fast convergence of the Best Pairs and Linearized Perturbations formulations with respect to the number of piecewise-linear terms used.

It can be also seen that the Simple Conservative Formulation is significantly the fastest, requires the least number of constraints, and leads to an acceptable performance when compared to the Two-term Formulation.

\begin{figure}[!ht]
    \centering
    \captionsetup{justification=centering, font=small}
    \includegraphics[scale=0.56]{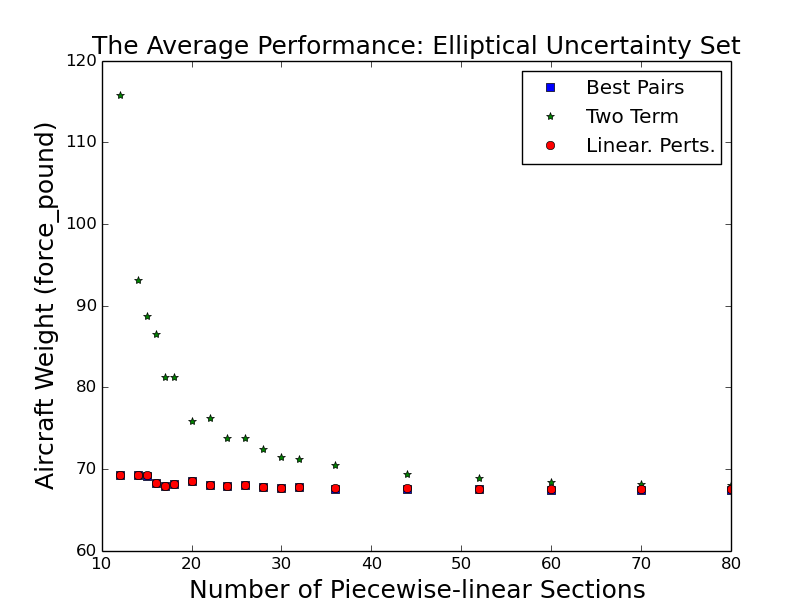}
    \caption{The performance of the solar aircraft design as a function of the number of piecewise-linear terms for elliptical uncertainty set.}
    \label{solar_var_pwl}
\end{figure}
\section{Conclusion} \label{conc}
In this article we have described three different methodologies for creating approximate formulations of robust geometric programs. These methodologies build on each other to derive equivalent but increasingly less conservative formulations which are tractable as GPs, SPs, or conic optimization problems. Several aircraft design problems from the GP literature were robustified and a significant improvement over the state of art \cite{hsiung_kim_boyd_2007} was shown, implying that robust programming may be a promising way to model uncertainties in engineering design. We are currently extending this work to solve Robust Signomial Programming problems.

\appendix

\section{Robust Linear Programming} \label{LP_to_GP}

This section reviews robust linear programming, a building block to formulate a tractable approximate robust geometric program. Two different examples of perturbation sets will be used for clarification: box and elliptical. Those sets will be used for our discussion throughout the article due to the fact that they are frequently used.

Consider the system of linear constraints
\begin{displaymath}
    \mat{A}\vec{x} + \vec{b} \leq 0
\end{displaymath}
where
\begin{equation*}
\begin{aligned}
\mat{A} &\text{ is $m \times n$}\\
\vec{x} &\text{ is $n \times 1$}\\
\vec{b} &\text{ is $m \times 1$}\\
\end{aligned}
\end{equation*}
Assuming that data is uncertain and is given by equations \eqref{Data} and \eqref{perturbation_set}, the constraints should be satisfied for all values of $\vec{\zeta}$, and thus the robust counterparts of the $i^{th}$ linear constraint is:
\begin{equation*}
    \sup_{\vec{\zeta} \in \mathcal{Z}}\left\{ \textstyle{\sum}_{l=1}^{L} \zeta_l([\vec{a}^l_i]^T \vec{x} + b^l_i) \right\} \leq - [\vec{a}^0_i]^T\vec{x} - b^0_i
\end{equation*}
This is equivalent to the optimization problem:
\begin{equation}
    \max_{\vec{\zeta},\vec{u}} \left\{ \textstyle{\sum}_{l=1}^{L} \zeta_l([\vec{a}^l_i]^T \vec{x} + b^l_i): \mat{F}\vec{\zeta} + \mat{G}\vec{u} + \vec{h} \in \mathbf{K} \right\} \leq - [\vec{a}^0_i]^T\vec{x} - b^0_i
\label{linear_counterparts}
\end{equation}
Applying the conic duality theorem, equation \eqref{linear_counterparts} is equivalent to:
\begin{equation}
\begin{aligned}
\vec{h}^T\vec{y}_i + [\vec{a}_i^0]^T\vec{x} + b_i^0 &\leq 0\\
\mat{G}^T\vec{y}_i &= \vec{0}\\
(\mat{F}^T\vec{y}_i)_l + [\vec{a}_i^l]^T\vec{x} + b_i^l &= 0 \quad l = 1,2,...,L\\
\vec{y}_i &\in \mathbf{K}^*
\end{aligned}
\label{robust_linear_general}
\end{equation}
where $\vec{y}_i \in \mathbb{R}^N$, and $\mathbf{K}^*$ is the dual cone of $\mathbf{K}$ (see Appendix \ref{cones}) \cite{ben-tal_ghaoui_nemirovski_2009}.

\subsection{Box Uncertainty Sets} \label{box_linear_app}
If the perturbation set $\mathcal{Z}$ given in equation \eqref{perturbation_set} is a box uncertainty set, i.e. $\|\vec{\zeta}\|_{\infty} \leq \Gamma$, then 
\begin{itemize}
	\item $\mat{F}\vec{\zeta} = [\vec{\zeta};0]$ 
	\item $\mat{G} = \mat{0}$, $\vec{h} = [\vec{0}_{L \times 1};\Gamma]$
	\item $\mathbf{K} = \left\{(\vec{z};t) \in \mathbb{R}^L \times \mathbb{R}: t > \|\vec{z}\|_{\infty} \right\}$ 
	\item the dual cone $\mathbf{K}^* = \left\{(\vec{z};t) \in \mathbb{R}^L \times \mathbb{R}: t > \|\vec{z}\|_{1} \right\}$
\end{itemize}
and therefore, equation \eqref{linear_counterparts} is equivalent to:
\begin{equation}
\Gamma \textstyle{\sum}_{l=1}^L |- {b}^l_{i} - \vec{a}^l_i\vec{x}| + \vec{a}^0_i\vec{x} + b^0_i \leq 0
\label{box_absolute}
\end{equation}
If only $\vec{b}$ is uncertain, i.e. $\mat{A}^l = 0 \quad \forall l = 1,2,...,L$, then equation \eqref{box_absolute} will become:
\begin{equation}
\textstyle{\sum}_{l=1}^L \vec{a}^0_{i}\vec{x} + b^0_{i} + \Gamma \textstyle{\sum}_{l=1}^L |b^l_{i}| \leq 0
\label{box_coeff}
\end{equation}
which is a linear constraint.\\
On the other hand, if $\mat{A}$ is also uncertain, then equation \eqref{box_absolute} is equivalent to the following set of linear constraints:
\begin{equation}
\begin{aligned}
\Gamma \textstyle{\sum}_{l=1}^L w^l_{i} + \vec{a}^0_{i}\vec{x} + b^0_{i} &\leq 0\\
- b^l_{i} - \vec{a}^l_{i}\vec{x} &\leq w^l_{i} &&\forall l \in 1,...,L\\
b^l_{i} + \vec{a}^l_{i}\vec{x} &\leq w^l_{i} &&\forall l \in 1,...,L\\
\end{aligned}
\label{box_linear}
\end{equation}

\subsection{Elliptical Uncertainty Sets} \label{ellip}
If the perturbation set $\mathcal{Z}$ is now an elliptical uncertainty set, i.e. $\textstyle{\sum}_{l=1}^L\frac{\zeta_l}{\sigma_l} \leq \Gamma$, then 
\begin{itemize}
	\item $\mat{F}\vec{\zeta} = [\mat{\sigma}^{-1}\vec{\zeta};\ 0]$ with $\mat{\sigma} = \text{diag}(\sigma_1,...,\sigma_L)$
	\item $\mat{G} = \mat{0}$, $\vec{h} = [\vec{0}_{L \times 1};\ \Gamma]$
	\item $\mathbf{K} = \left\{(\vec{z};t) \in \mathbb{R^L} \times \mathbb{R}: t > \|\vec{z}\|_{2} \right\}$
	\item The dual cone $\mathbf{K}^*$ = $\mathbf{K}$ (Lorentz or second order cone)
\end{itemize}
and therefore, equation \eqref{linear_counterparts} is equivalent to:
\begin{equation}
\Gamma \sqrt{\textstyle{\sum}_{l=1}^L \sigma_l^2(- b^l_{i} - \vec{a}^l_{i}\vec{x})^2} + \vec{a}^0_{i}\vec{x} + b^0_{i} \leq 0
\label{ell_absolute}
\end{equation}
which is a second order conic constraint.\\
If only $\vec{b}$ is uncertain, i.e. $\mat{A}^l = 0 \quad \forall l = 1,2,...,L$, then equation \eqref{ell_absolute} will become:
\begin{equation}
\vec{a}^0_{i}\vec{x} + b^0_{i} + \Gamma \sqrt{\textstyle{\sum}_{l=1}^L \sigma_l^2(b^l_{i})^2} \leq 0
\label{ell_coeff}
\end{equation}
which is a linear constraint ($\sqrt{\textstyle{\sum}_{l=1}^L \sigma_l^2(b^l_{i})^2}$ is a constant).

\section{Cones} \label{cones}
\subsection{Euclidean Space}
An Euclidean space is a finite dimensional linear space over real numbers equipped with an inner product $\langle x,y \rangle_E$.

\subsection{Cones}
A nonempty subset $\textbf{K}$ of an Euclidean space is called a cone if for any $x \in \textbf{K}$ and $\alpha \geq 0$ $\alpha x \in \textbf{K}$ \cite{bertsimas_tsitsiklis_1997}.\\
A cone is said to be convex cone if $\alpha, \beta \geq 0$ and $x, y \in \textbf{K}$, we have $\alpha x + \beta y \in \textbf{K}$ \cite{ben-tal_ghaoui_nemirovski_2009}.

\subsection{Dual Cones}
If $\textbf{K}$ is a cone in an euclidean space $\textbf{E}$, then the set 
\begin{equation*}
    \textbf{K}^* = \left\{ e \in \textbf{E}:\langle e,h \rangle_E \geq 0 \quad \forall h \in \textbf{K} \right\}
\end{equation*}
is also a cone and is called the cone dual to $\textbf{K}$ \cite{ben-tal_ghaoui_nemirovski_2009}.

\section{Equivalence Relations} \label{EqRel}
Let $\mathcal{R}$ be some relation on a set $\mathbf{S}$, and let $x,y \in \mathbf{S}$. We say $x \simeq y$ if $x$ and $y$ are related.
 
A relation $\mathcal{R}$ on a set $\mathbf{S}$ is called an equivalence relation if the following is true
\begin{itemize}
	\item the relation is reflexive, i.e. for all $a \in \mathbf{S}$, $a \simeq a$
	\item the relation is symmetric, i.e. for $a,b \in \mathbf{S}$, if $a \simeq b$, then $b \simeq a$
	\item the relation is transitive, i.e. for $a, b, c \in \mathbf{S}$, if $a \simeq b$ and $b \simeq c$, then $a \simeq c$
\end{itemize}
An equivalence relation naturally partitions a set into equivalence classes. Those classes are such that if $a$ and $b$ belong to the same class then $a \simeq b$, and if $a$ and $b$ are not related, then they belong to different classes.

\section{Signomial Programming} \label{sigProg}
Signomials allow us to solve a non log-convex optimization problem as sequential geometric programs \cite{MARANAS1997351}. A signomial program has the following form:
\begin{equation}
\begin{aligned}
&\text{minimize } &&f_{0}(\mathbf{x})\\
& &&g_{i}(\mathbf{x}) -  h_{i}(\mathbf{x}) &&\leq 0 \quad &i = 1, ...., m
\end{aligned}
\label{sp}
\end{equation}
where $g_{i}$ and $h_{i}$ are posynomials.

To clarify how a signomial program is useful in our discussion, consider equation \eqref{linearCon_linPerts}, and assume that $\mathcal{Z}$ is an elliptical uncertainty set. Using the knowledge from subsection \ref{ellip}, \eqref{linearCon_linPerts} is equivalent to:
\begin{equation}
\begin{aligned}
&\textstyle{\sum}_{k \in S_{i,j}}g_{i,j}^k e^{\vec{a_{ik}}\vec{x} + b^0_{ik}} + e^{0.5s_{i,j}} && \leq e^{t_{ij}} \qquad && \forall (i, j) \in \mathbf{P}\\
&\textstyle{\sum}_{l=1}^L \sigma_l e^{s_{i,j}^{l}} && \leq e^{s_{i,j}} \qquad && \forall (i, j) \in \mathbf{P}\\
&\left(\textstyle{\sum}_{k \in S_{i,j}}f_{i,j}^{k,l}e^{\vec{a_{ik}}\vec{x} + b^0_{ik}}\right)^2 && \leq e^{s_{i,j}^{l}} \qquad \forall l \in 1,...,L \qquad && \forall (i, j) \in \mathbf{P}
\end{aligned}
\label{sp_compatible_constraints}
\end{equation}
If one of the `$f$'s is negative, then some constraints from the third set might not be GP-compatible, but SP-compatible. The robust geometric program will be a signomial program.

\section*{Acknowledgments}
This research was supported by Boeing. We thank Professor Karen Willcox who provided insight, expertise, and important feedback that greatly assisted this work.
\bibliography{main}

\begin{thebibliography}{10}

\bibitem{babakhani_lavaei_doyle_hajimiri_2010}
A~Babakhani, J~Lavaei, J~C Doyle, and A~Hajimiri.
\newblock Finding globally optimum solutions in antenna optimization problems.
\newblock {\em 2010 IEEE Antennas and Propagation Society International
  Symposium}, 2010.

\bibitem{bandi_bertsimas_2012}
Chaithanya Bandi and Dimitris Bertsimas.
\newblock Tractable stochastic analysis in high dimensions via robust
  optimization.
\newblock {\em Mathematical Programming}, 134(1):23–70, 2012.

\bibitem{ben-tal_ghaoui_nemirovski_2009}
A.~Ben-Tal, Laurent~El Ghaoui, and Nemirovskiĭ~Arkadiĭ Semenovich.
\newblock {\em Robust optimization}.
\newblock Princeton University Press, 2009.

\bibitem{bertsimas_sim_2005}
Dimitris Bertsimas and Melvyn Sim.
\newblock Tractable approximations to robust conic optimization problems.
\newblock {\em Springer-Verlag}, Dec 2005.

\bibitem{bertsimas_tsitsiklis_1997}
Dimitris Bertsimas and John~N. Tsitsiklis.
\newblock {\em Introduction to linear optimization}.
\newblock Athena Scientific, 1997.

\bibitem{birge_louveaux_2011}
John~R. Birge and F.~Louveaux.
\newblock {\em Introduction to stochastic programming}.
\newblock Springer Science, 2011.

\bibitem{GP_tutorial}
Stephen Boyd, Seung-Jean Kim, Lieven Vandenberghe, and Arash Hassibi.
\newblock A tutorial on geometric programming.
\newblock {\em Optimization and Engineering}, 8(1):67–127, Oct 2007.

\bibitem{boyd_kim_patil_horowitz_2005}
Stephen~P. Boyd, Seung-Jean Kim, Dinesh~D. Patil, and Mark~A. Horowitz.
\newblock Digital circuit optimization via geometric programming.
\newblock {\em Operations Research}, 53(6):899–932, 2005.

\bibitem{burton_hoburg}
Michael Burton and Warren Hoburg.
\newblock Solar and gas powered long-endurance unmanned aircraft sizing via
  geometric programming.
\newblock {\em Journal of Aircraft}, 55(1):212–225, 2018.

\bibitem{RGPcoNP}
Andre Chassein and Marc Goerigk.
\newblock Robust geometric programming is co-np hard.
\newblock 2014.

\bibitem{chiang_2005}
Mung Chiang.
\newblock {\em Geometric programming for communication systems}.
\newblock Now Publishers, 2005.

\bibitem{hershenson_2004}
M.~del Mar~Hershenson.
\newblock Cmos analog circuit design via geometric programming.
\newblock In {\em Proceedings of the 2004 American Control Conference},
  volume~4, pages 3266--3271 vol.4, June 2004.

\bibitem{bib:Domahidi2013ecos}
A.~Domahidi, E.~Chu, and S.~Boyd.
\newblock {ECOS}: {A}n {SOCP} solver for embedded systems.
\newblock In {\em European Control Conference (ECC)}, pages 3071--3076, 2013.

\bibitem{greenberg_1995}
Harvey~J. Greenberg.
\newblock Mathematical programming models for environmental quality control.
\newblock {\em Operations Research}, 43(4):578–622, 1995.

\bibitem{hoburg_abbeel_2014}
Warren Hoburg and Pieter Abbeel.
\newblock Geometric programming for aircraft design optimization.
\newblock {\em AIAA Journal}, 52(11):2414–2426, 2014.

\bibitem{hoburg_kirschen_abbeel_2016}
Warren Hoburg, Philippe Kirschen, and Pieter Abbeel.
\newblock Data fitting with geometric-programming-compatible softmax functions.
\newblock {\em Optimization and Engineering}, 17(4):897–918, Apr 2016.

\bibitem{woodyThesis}
Warren~Woodrow Hoburg.
\newblock {\em Aircraft Design Optimization as a Geometric Program}.
\newblock PhD thesis, University of California, Berkeley, 2013.

\bibitem{hsiung_kim_boyd_2007}
Kan-Lin Hsiung, Seung-Jean Kim, and Stephen Boyd.
\newblock Tractable approximate robust geometric programming.
\newblock {\em Optimization and Engineering}, 9(2):95–118, Apr 2007.

\bibitem{kortanek_xu_ye_1997}
K.~O. Kortanek, Xiaojie Xu, and Yinyu Ye.
\newblock An infeasible interior-point algorithm for solving primal and dual
  geometric programs.
\newblock {\em Mathematical Programming}, 76(1):155–181, 1997.

\bibitem{MARANAS1997351}
Costas~D. Maranas and Christodoulos~A. Floudas.
\newblock Global optimization in generalized geometric programming.
\newblock {\em Computers \& Chemical Engineering}, 21(4):351 -- 369, 1997.

\bibitem{mazumdar_jefferson_1983}
M.~Mazumdar and T.~R. Jefferson.
\newblock Maximum likelihood estimates for multinomial probabilities via
  geometric programming.
\newblock {\em Biometrika}, 70(1):257, 1983.

\bibitem{nesterov_nemirovskii_1994}
Y.~Nesterov and A.~Nemirovskii.
\newblock {\em Interior-Point Polynomial Algorithms in Convex Programming}.
\newblock Society for Industrial and Applied Mathematics, 1994.

\bibitem{o’donoghue_chu_parikh_boyd_2016}
Brendan O’Donoghue, Eric Chu, Neal Parikh, and Stephen Boyd.
\newblock Conic optimization via operator splitting and homogeneous self-dual
  embedding.
\newblock {\em Journal of Optimization Theory and Applications},
  169(3):1042–1068, 2016.

\bibitem{prékopa_2010}
András Prékopa.
\newblock {\em Stochastic programming}.
\newblock Kluwer Acad. Publ., 2010.

\bibitem{soyster_1973}
A.~L. Soyster.
\newblock Technical note—convex programming with set-inclusive constraints
  and applications to inexact linear programming.
\newblock {\em Operations Research}, 21(5):1154–1157, 1973.

\bibitem{wall_greening_woolsey_1986}
Thomas~Wayne Wall, Doran Greening, and R.~E.~D. Woolsey.
\newblock Or practice—solving complex chemical equilibria using a
  geometric-programming based technique.
\newblock {\em Operations Research}, 34(3):345–355, 1986.

\end{thebibliography}
\bibliographystyle{plain}

\end{document}